\theoremstyle{plain}
\newtheorem{thm}{Theorem}[section]
\newtheorem*{thm*}{Theorem}
\newtheorem{prop}[thm]{Proposition}
\newtheorem{lem}[thm]{Lemma}
\theoremstyle{definition}
\newtheorem{defn}[thm]{Definition}
\newtheorem{eg}[thm]{Example}
\newtheorem{rem}[thm]{Remark}
\newtheorem{rems}[thm]{Remarks}
\newtheorem*{rem*}{Remark}
\newtheorem*{rems*}{Remarks}
\newtheorem*{note*}{Note}
\def\<{\langle}
\def\>{\rangle}
\def\geqs{\geqslant}
\def\leqs{\leqslant}
\def\ssm{\smallsetminus}
\def\inv{^{-1}}
\def\a{\alpha}
\def\b{\beta}
\def\g{\gamma}
\def\e{\epsilon}
\def\s{\sigma}
\def\v{\varphi}
\def\w{\omega}
\def\z{\zeta}
\def\upchi{\raise.4ex\hbox{$\chi$}}
\def\E{\mathcal E}
\def\R{\mathbb R}
\def\CC{\mathscr C}
\def\GG{\mathscr G}
\def\/{\kern 0.05em}
\newcommand{\ts}{\textstyle}
\newcommand{\nab}[1]{\nabla\kern-.2em\lower.8ex\hbox{\SMALL$#1$}\/}
\newcommand{\nabv}[1]{\nabla^v\kern-.3em\lower.8ex\hbox{\SMALL$#1$}\/}
\newcommand{\barnab}[1]{\bar\nabla\kern-.2em\lower.8ex\hbox{\SMALL$#1$}\/}
\newcommand{\nabsq}[2]{\nabla^2\kern-.3em\lower.8ex\hbox{\SMALL$#1,#2$}\/}
\newcommand{\shape}[1]{\A\kern-.1em\lower.8ex\hbox{\SMALL$#1$}\/}
\newcommand{\tr}[1]{\trace\kern-.2em\lower.8ex\hbox{\SMALL$#1$}\,}
\newcommand{\diffop}[2]{#1\kern-.1em\lower.8ex\hbox{\SMALL$#2$}\/} 
\newcommand{\dotprod}{\/\raise.25ex\hbox{\SMALL$\bullet$}\/}
\newcommand{\uperp}[1]{#1\raise1.1ex\hbox{\SMALL$\bot$}}
\DeclareMathOperator{\trace}{trace} 
\DeclareMathOperator{\grad}{grad}
\DeclareMathOperator{\volume}{vol}
\DeclareMathOperator{\Volume}{Vol}  
\DeclareMathOperator{\Ric}{Ric} 
\DeclareMathOperator{\diverge}{div}
\numberwithin{equation}{section}
\begin{document}

\title{Harmonic vector fields on pseudo-Riemannian manifolds}

\author{R.~M. Friswell}

\author{C.~M. Wood}
\address{Department of Mathematics \\
University of York \\
Heslington, York Y010 5DD\\
U.K.} \email{me@robertfriswell.co.uk, chris.wood@york.ac.uk}

\keywords{Harmonic map, harmonic section, pseudo-Riemannian vector bundle, generalised Cheeger-Gromoll metric, pseudo-Riemannian manifold, pseudo-Riemannian space form, Killing field, conformal gradient field, anti-isometry, para-K\"ahler structure}

\subjclass[2010]{53C07, 53C20, 53C43, 53C50, 58E20, 58E30}

\date{\today}

\begin{abstract}
The theory of harmonic vector fields on Riemannian manifolds is generalised to pseudo-Riemannian manifolds.  Harmonic conformal gradient fields on pseudo-Euclidean hyperquadrics are classified up to congruence, as are harmonic Killing fields on pseudo-Riemannian quadrics.  A para-K\"ahler twisted anti-isometry is used to correlate harmonic vector fields on the quadrics of neutral signature.
\end{abstract}

\maketitle
\thispagestyle{empty}

\section{Introduction}
Attempts to apply the variational theory of harmonic maps \cite{ES} to vector fields on Riemannian manifolds
foundered at an early stage when it was observed that, for a compact Riemannian manifold $(M,g)$, and with respect to the most natural metric $h$ on the total  space $TM$ of the tangent bundle (viz.\ the Sasaki metric \cite{S}), a vector field that is a harmonic map $(M,g)\to(TM,h)$ is necessarily parallel \cite{I,N}.  Moreover this remains the case if the vector field is only required to be a harmonic section of the tangent bundle \cite{W}.  A more interesting theory \cite{G} emerges in the special case where the vector field has constant length and is required to be a harmonic section of the corresponding isometrically embedded sphere sub-bundle of $TM$.  However this theory is necessarily limited, in the compact case, to manifolds of zero Euler characteristic.  
Thus, the prospects for using ``harmonicity'' as a criterion for optimality of vector fields, or more generally sections of Riemannian vector bundles, appeared limited.

\par
In \cite{BLW1} it was proposed to alleviate this problem by considering a wider range of metrics on $TM$.  More precisely, for a fixed Riemannian metric $g$ on $M$, there is an associated 2-parameter family $\CC\GG$ of {\sl generalised Cheeger-Gromoll metrics\/} on  $TM$:
\begin{equation*}
\CC\GG=\{h_{p,q}:p,q\in\R\},
\label{eq:gcg}
\end{equation*}    
in which $h_{0,0}=h$ (the Sasaki metric), $h_{1,1}$ is the Cheeger-Gromoll metric \cite{CG}, and $h_{2,0}$ is the stereographic metric; the general definition of $h_{p,q}$ is given in \eqref{eq:hpq} below.  The family $\CC\GG$ is ``natural'' in the sense of \cite{KS}, and more significantly renders the bundle projection $TM\to M$ a Riemannian submersion with totally geodesic fibres.  Furthermore, with only two degrees of freedom, $\CC\GG$ is a very tightly controlled deformation of the Sasaki metric.  It should be emphasised that this deformation has no affect on the Riemannian metric $g$ on $M$;
only the induced geometry of the tangent spaces varies.

\par
It turns out \cite{BLW1} that the energy functional behaves no less rigidly when the Sasaki metric $h_{0,0}$ is replaced by $h_{1,1}$ or $h_{2,0}$; however, other members of $\CC\GG$ permit greater flexibility.  In \cite{BLW2}, a {\sl harmonic vector field\/} on the Riemannian manifold $(M,g)$ was defined to be a harmonic section of $TM$ with respect to the Riemannian metric $g$ on $M$ and {\it some\/} $h_{p,q}\in\CC\GG$; classifications of harmonic vector fields were then obtained for conformal gradient fields and Killing fields on non-flat Riemannian space forms.  Typically (but not invariably) a harmonic vector field is metrically unique; that is, it picks a {\it unique\/} $h_{p,q}\in\CC\GG$.  Furthermore this $h_{p,q}$ has $q<0$,  which means that unlike the Sasaki, Cheeger-Gromoll and stereographic metrics, its signature varies across $TM$: Riemannian on a tubular neighbourhood of the zero section,  Lorentzian on the exterior of the tube, with a mild degeneracy on the boundary, a sphere bundle of radius $1/\sqrt{-q}$.  

\par
In view of the pseudo-Riemannian character of many elements of $\CC\GG$, in this paper we seek a generalisation of the theory of harmonic vector fields to pseudo-Riemannian manifolds (also referred to as semi-Riemannian manifolds \cite{ON}).  An immediate issue is that when the base metric $g$ is not Riemannian the Cheeger-Gromoll metric (ie.\ $h_{1,1}$) itself develops a codimension-one singularity, and this phenomenon persists for many other  $h_{p,q}\in\CC\GG$ (\cref{sec:GCGM}).  Thus, even when $M$ is compact, the energy functional for vector fields is not in general globally defined, so the variational problem under consideration is of necessity entirely local.  Despite this, and somewhat remarkably, the singularity in the energy functional is completely resolved at the level of the first variation: the Euler-Lagrange equations for harmonic sections with respect to any $h_{p,q}\in\CC\GG$ are in fact globally defined, and coincide (tensorially) with those in the Riemannian case (\cref{sec:HS}).  This enables us  (\cref{sec:HCGF}) to extend the classification of harmonic conformal gradient fields on Riemannian space forms obtained in \cite{BLW2} to hyperquadrics of pseudo-Euclidean space (\cref{thm:cgf}), and then (\cref{sec:HKF}) examine Killing fields on these spaces.  In particular, we obtain a condition for a preharmonic Killing field to be harmonic (\cref{thm:kf}).  (The notion of preharmonicity was introduced in \cite{BLW2}, and may be viewed as an integrability condition for harmonicity.)  We show (\cref{sec:2d}) that all Killing fields on the $2$-dimensional pseudo-Riemannian quadrics are preharmonic, and complete the classification of harmonic Killing fields in this case: up to pseudo-Riemannian congruence there is a unique harmonic Killing field on five of the six metrically distinct quadrics, the exception being the Riemannian $2$-sphere, on which no Killing field is harmonic (\cref{thm:harmkill}).  An interesting feature is the existence of a harmonic Killing field on the negative definite pseudo-hyperbolic plane, which is anti-isometrically dual to the Riemannian $2$-sphere, illustrating that although harmonic vector fields are invariant under isometry they are not invariant under anti-isometry.  However, further investigation (\cref{sec:PK}) shows that the combination of an anti-isometry with a para-K\"ahler twist does in fact preserve harmonic vector fields (\cref{prop:pktwist}).  When applied to the quadrics of neutral signature (viz.\ quotients of the de Sitter and anti-de Sitter planes), this yields a correspondence between harmonic Killing fields and harmonic conformal gradient fields, unifying results from Sections \ref{sec:HCGF} and \ref{sec:2d}.

\par
It may aid the reader to note that in most cases, when dealing with the differential $d\v$ of a smooth mapping $\v$ between manifolds, we omit the base point.  The exception is Section \ref{sec:2d}, where the base point is written as a subscript.

\par
The paper is based on parts of the first author's PhD thesis \cite{F}.  The authors express their thanks to the referee for a thorough reading of the manuscript.

\bigskip
\section{Generalised Cheeger-Gromoll metrics on pseudo-Riemannian vector bundles}\label{sec:GCGM}
             
A {\sl pseudo-Riemannian vector bundle\/} is a vector bundle $\pi\colon\E\to M$ equipped with a linear connection $\nabla$ and holonomy-invariant fibre metric $\<*,*\>$; thus:
$$
X\< \s,\tau\> = \<\nab X \s,\rho\> + \<\s,\nab X\rho\>,
$$
for all $X\in TM$ and all sections $\s,\rho\in\Gamma(\E)$, and $\<*,*\>$ is non-degenerate but not necessarily positive definite.  The motivating and most natural example is, of course, the tangent bundle of a pseudo-Riemannian manifold equipped with its Levi-Civita connection.  Let $K\colon T\E\to\E$ be the associated connection map, and let
$$
T\E=V\oplus H=\ker(d\pi)\oplus\ker(K)
$$
denote the splitting into vertical and horizontal distributions.  We also recall the following characteristic property of the connection map:    
\begin{equation}
K(d\s(X))=\nab X \s. 
\label{eq:cmap}
\end{equation}

\par 
Now let $g$ be a pseudo-Riemannian metric on $M$. The familiar construction of the Sasaki metric in the Riemannian case generalises naturally, yielding a pseudo-Riemannian metric $h$ on $\E$, which we continue to refer to as the Sasaki metric.  The construction of the generalised Cheeger-Gromoll metrics in the Riemannian case \cite{BLW1} may also be generalised, as follows.
Let $\E'\subset \E$ be the open dense subset: 
$$
\E'=\{e\in \E : \<e,e\> \neq -1\},
$$                          
and for each $(p,q)\in \R^2$ define a symmetric $(2,0)$-tensor $h_{p,q}$ on $\E'$ as follows:
\begin{align}
h_{p,q}(A,B) &= g(d\pi(A), d\pi(B)) 
\nonumber \\ 
&\qquad+\w^p(e)\bigl(\<K(A),K(B)\>+q\< K(A),e\>\< e,K(B)\>\bigr),
\label{eq:hpq}
\end{align}
for all $A,B\in T_e\E'$ and all $e\in\E'$, where $\w\colon\E'\to\R$ is the smooth function: 
$$
\w(e)={1}/{{\lvert 1+\< e,e\>\rvert}}.
$$  
If $q=0$ then $h_{p,q}$ is a pseudo-Riemannian metric on $\E'$ with the same signature as the Sasaki metric $h=h_{0,0}$.  However if $q\neq0$ then $h_{p,q}$ is of variable signature across $\E'$.  More precisely, if $q<0$ (resp.\ $q>0$) then $h_{p,q}$ has the same signature as the Sasaki metric in the region of $\E'$ where  $\< e,e\>< -1/q$ (resp.\ $\<e,e\>>-1/q$). Furthermore, for all $q\neq1$, $h_{p,q}$ degenerates mildly on the sphere bundle: 
$$
S\E(-1/q)=\{e\in\E: \< e,e\>=-1/q\},
$$
and if $q<0$ (resp.\ $q>0$) then the index of $h_{p,q}$ increases (resp.\ decreases) by $1$ in the space-like (resp.\ time-like) region where $\< e,e\>> -1/q$ (resp.\ $\<e,e\> < -1/q$).  Nevertheless, the parameters $(p,q)$ are referred to as the {\sl metric parameters\/} of the generalised Cheeger-Gromoll metric $h_{p,q}$.  If $p\leqs0$ then $h_{p,q}$ extends to $\E$, but degenerates drastically (to $\pi^*g$) on $S\E(-1)$ if $p<0$.  However if $p>0$ then $h_{p,q}$ becomes irremovably singular on $S\E(-1)$.

\bigskip  
\section{Harmonic sections}\label{sec:HS}

Let $\s$ be a section of $\E$, with {\sl pseudo-length\/} $\<\s,\s\>$ not identically $-1$; thus the preimage $\s\inv(\E')\subset M$ is a non-empty open subset.  The {\sl local $(p,q)$-energies\/} of $\s$ are defined:
$$
E_{p,q}(\s;U)=\int_U e_{p,q}(\s)\volume(g),
$$
for all relatively compact open subsets $U\subset \s\inv(\E')$, where $e_{p,q}(\s)\colon\s\inv(\E')\to\R$ is the {\sl $(p,q)$-energy density:}
$$
e_{p,q}(\sigma)=\tfrac{1}{2}h_{p,q}(d\s,d\s).
$$
Note that:
$$
h_{p,q}(d\s,d\s)=\sum_i \e_i\/h_{p,q}(d\s(E_i),d\sigma(E_i)),
$$      
for any $g$-orthonormal local tangent frame $\{E_i\}$ of $M$, where: 
\begin{equation}
\e_i=\< E_i,E_i\>=\pm1
\label{eq:indicator}
\end{equation} 
are the {\sl indicator symbols\/} of the frame.
It follows from \eqref{eq:cmap} and \eqref{eq:hpq} that:
\begin{align}
2e_{p,q}(\s)                          
&=n+\w^p(\s)\bigl(\<\nabla\s,\nabla\s\>
+q\/g(\nabla{F},\nabla{F})\bigr), 
\label{eq:hpqdensity}
\end{align} 
where $F=\frac12\<\s,\s\>$ and $\nabla F=\grad{F}$, the pseudo-Riemannian gradient vector field on $M$.  

\par
Composition of $d\s$ with the orthogonal projections of $T\E$ onto $V$ and $H$ yields the decomposition:
$$
d\sigma=d^v\sigma+d^h\sigma,
$$ 
and we define the {\sl vertical\/} and {\sl horizontal $(p,q)$-energy densities\/} by, respectively:
$$
e_{p,q}^v(\s)=\tfrac12 h_{p,q}(d^v\s,d^v\s),
\qquad 
e_{p,q}^h(\s)=\tfrac12 h_{p,q}(d^h\s,d^h\s),
$$
Since $V$ and $H$ are $h_{p,q}$-orthogonal distributions, the $(p,q)$-energy density splits:
$$
e_{p,q}(\s)=e_{p,q}^v(\s)+e_{p,q}^h(\s),
$$
and a brief further inspection of \eqref{eq:cmap} and \eqref{eq:hpq} reveals that:
$$
e_{p,q}^v(\s)=\tfrac12 \w^p(\s)(\<\nabla\s,\nabla\s\>
+qg(\nabla{F},\nabla{F})),
\qquad 
e_{p,q}^h(\s)=n/2.
$$
Thus the horizontal $(p,q)$-energy density is globally defined and constant, and
$$
E_{p,q}(\s;U)=E^v_{p,q}(\s;U)+\frac{n}{2}\Volume(U),
$$
where
\begin{equation*}
E_{p,q}^v(\s;U)=\int_U e_{p,q}^v(\s)\volume(g)
\end{equation*}
is the {\sl local vertical $(p,q)$-energy\/} of $\sigma$.

\medskip
\begin{defn}\label{defn:pqharm}
If the pseudo-length of $\s$ is not identically $-1$ then $\s$ is said to be a {\sl $(p,q)$-harmonic section\/} of $\E$ if:
$$
\frac{d}{dt}\Big\vert_{t=0}E^v_{p,q}(\s_t;U)=0,
$$
for all relatively compact open sets $U\subset \s\inv(\E')$ and all variations $\s_t$ of $\s$ through sections of $\E$ with $\s_t=\s$ on $M\ssm U$.  Note that $\s_t(U)\subset\E'$ for sufficiently small $t$.
\end{defn}     

\medskip                                      
The derivation of the Euler-Lagrange equations for this variational problem proceeds in a similar way to the Riemannian case \cite{BLW1}, but working in the pseudo-Riemannian environment requires additional technical vigilance.  Given a variation $\s_t$ as in \cref{defn:pqharm} the variation field $v_t$ is defined, as usual:
$$
v_t(x)=\frac{d}{dt}\s_t(x).
$$
Since $\s_t$ is a variation through sections, $v_t$ is a lift of $\s_t$ into $V$, which may be realised as a section $\rho_t$ of $\E$ by application of the connection map:
$$
\rho_t=K \circ v_t.
$$ 
Furthermore $\rho_t$ is compactly supported, within the closure $\bar U$.         
To simplify our main calculation it is convenient to split the first variation into two integrals as follows: 
\begin{align*}
\frac{d}{dt}\Big\vert_{t=0}E^v_{p,q}(\s_t;U)
&=\frac12 \int_U \frac{d}{dt}\Big\vert_{t=0}\w^p(\s_t)
\bigl(\<\nabla\s,\nabla\s\>+q\/g(\nabla F,\nabla F)\bigr)\volume(g) \\[1ex]
&\qquad
+\frac12 \int_U \w^p(\s)\frac{d}{dt}\Big\vert_{t=0}
\bigl(\<\nabla\s_t,\nabla\s_t\>+q\/g\bigl(\nabla F_t,\nabla F_t)\bigr)\volume(g) \\[1ex]
&=I_1+I_2.
\end{align*}        
We consider each integral in turn, introducing $\a=dF \otimes \s$, an $\E$-valued $1$-form on $M$, and abbreviating $\rho_0=\rho$. The proof of the following result (Lemma \ref{lem:pqharm}) is similar to that given in \cite{BLW1}; however note the appearance of an indicator symbol: 
$$
\e=\dfrac{1+2F}{\lvert 1+ 2F \rvert}=\pm1,
$$
to distinguish the cases $\<\s,\s\>>-1$ and $\<\s,\s\><-1$.
    
\medskip
\begin{lem}\label{lem:pqharm}\item{}

\medskip
\begin{enumerate}[leftmargin=1.7em, itemsep=1ex]

\item[\rm(1)]    
$\dfrac{d}{dt}\Big\vert_{t=0}\omega^p(\s_t)
=-2p\/\e\/\omega^{p+1}(\s) \<\s,\rho\>$.

\item[\rm(2)] 
$\dfrac{d}{dt}\Big\vert_{t=0} \<\nabla \s_t,\nabla \s_t\> 
=2\< \nabla \rho,\nabla \s\>$.

\item[\rm(3)] 
$\dfrac{d}{dt}\Big\vert_{t=0}\,g(\nabla F_t,\nabla F_t) 
=2\<\a,\nabla\rho\> + 2\<\nab{\nabla F}\s,\rho\>$.
\end{enumerate}
\end{lem}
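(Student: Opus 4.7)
The plan is to treat the three identities as successively more elaborate applications of the metric compatibility of $\nabla$, combined with the elementary observation that, for each fixed $x\in M$, the curve $t\mapsto\s_t(x)$ lies in the single vector space $\E_x$, so its ordinary $t$-derivative there equals $\rho_t(x)$ (since $K$ restricts to the canonical identification $V_{\s_t(x)}\cong\E_x$ on verticals). I will freely use that $\s_t(U)\subset\E'$ for small $t$, so $1+\<\s_t,\s_t\>$ retains a fixed sign.

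For (1), I would write $1+\<\s_t,\s_t\>=1+2F_t$, so that locally $\w^p(\s_t)=\e^{-p}(1+2F_t)^{-p}$, where $\e=(1+2F)/\lvert1+2F\rvert$ is locally constant in $t$ for small $t$. Differentiating and using $\dot F_t\vert_{t=0}=\<\s,\rho\>$ (from bilinearity of $\<*,*\>$ inside the vector space $\E_x$), I would regroup the sign factors via $\e^{-p}\cdot\e^{p+1}=\e$ and the identity $\w^{p+1}(\s)=\e^{-p-1}(1+2F)^{-p-1}$ to obtain the stated formula.

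For (2), I would work in a local frame $\{e_a\}$ of $\E$ and write $\s_t=\sum_a f_a^t\,e_a$; then $\nab{E_i}\s_t=\sum_a(E_if_a^t)e_a+f_a^t\nab{E_i}e_a$, and since $\partial_t$ commutes with $E_i$, differentiation gives $\tfrac{d}{dt}\nab{E_i}\s_t=\nab{E_i}\rho_t$. Holonomy-invariance of $\<*,*\>$ then yields $\tfrac{d}{dt}\<\nab{E_i}\s_t,\nab{E_i}\s_t\>=2\<\nab{E_i}\rho_t,\nab{E_i}\s_t\>$, and summing against the indicators $\e_i$ of an orthonormal frame produces (2).

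For (3), note that $E_iF_t=\<\nab{E_i}\s_t,\s_t\>$, so $g(\nabla F_t,\nabla F_t)=\sum_i\e_i\<\nab{E_i}\s_t,\s_t\>^2$. Differentiating at $t=0$ and invoking (2)'s commutation identity yields a sum of terms of the form $2\e_i(E_iF)\bigl(\<\nab{E_i}\rho,\s\>+\<\nab{E_i}\s,\rho\>\bigr)$. The first type recombines into $2\<\a,\nabla\rho\>$ since $\a(E_i)=(E_iF)\s$, and the second recombines into $2\<\nab{\nabla F}\s,\rho\>$ by $C^\infty(M)$-linearity of $\nab{\cdot}\s$ in its first slot, using $\sum_i\e_i(E_iF)E_i=\nabla F$. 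The only real subtlety lies in (1): the absolute value inside $\w$ forces us to introduce the sign factor $\e$ (locally constant in $t$ but not in $x$) and verify that a single residual $\e$ survives after cancellation. Parts (2) and (3) present no serious obstacle beyond careful index management and the correct attribution of the two terms in (3) to their respective geometric invariants.
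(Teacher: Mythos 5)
Your computation is correct and follows essentially the same route as the paper, which defers the details to \cite{BLW1} and flags only the new indicator symbol $\e$ in part (1) — precisely the subtlety you identify and resolve. One cosmetic repair: for non-integer $p$ with $\e=-1$ the factorisation $\w^p(\s_t)=\e^{-p}(1+2F_t)^{-p}$ raises a negative base to a real power, so it is cleaner to write $\w^p(\s_t)=\bigl(\e(1+2F_t)\bigr)^{-p}$ and differentiate using that $\e$ is constant in $t$, which produces the same single residual factor of $\e$ (and note that the product rule in part (2) is just bilinearity of the fibre metric on $\E_x$, not holonomy-invariance).
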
   
             
\medskip
\begin{prop}\label{prop:pqharm1}
The pieces of the first variation of the local vertical $(p,q)$-energy functional are:
\begin{align*} 
I_1&=-p\/\e\int_M \omega^{p+1}(\s)\<\s,\rho\> 
\bigl(\< \nabla\sigma,\nabla\sigma\> 
+qg( \nabla F,\nabla F) \bigr)\volume(g),  \\[1ex]
I_2&=\int_M \omega^p(\s)\bigl(\<\nabla\s+q\a,\nabla\rho\> 
+q\<\nab{\nabla F}\s,\rho\>\bigr)\volume(g).
\end{align*}
\end{prop}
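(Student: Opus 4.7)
The plan is to apply the Leibniz rule to the integrand of $E^v_{p,q}(\s_t;U)$.  Differentiating the product
$$
\tfrac12\,\w^p(\s_t)\bigl(\<\nabla\s_t,\nabla\s_t\>+q\,g(\nabla F_t,\nabla F_t)\bigr)
$$
at $t=0$ produces exactly the split $I_1+I_2$ already written out just before the statement: $I_1$ collects the contribution where the derivative falls on $\w^p(\s_t)$, and $I_2$ the contribution where it falls on the bracket.  Once this split is in hand, the three parts of \cref{lem:pqharm} are precisely what is needed to evaluate each piece, so the argument reduces to direct substitution followed by minor rearrangement.

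For $I_1$, I would substitute part~(1) of \cref{lem:pqharm} into the inner $t$-derivative.  The factor $-2p\,\e$ supplied by the lemma absorbs the leading $\tfrac12$, and the bracket $\<\nabla\s,\nabla\s\>+q\,g(\nabla F,\nabla F)$ carries through unchanged, yielding
$$
-p\,\e\,\w^{p+1}(\s)\<\s,\rho\>\bigl(\<\nabla\s,\nabla\s\>+q\,g(\nabla F,\nabla F)\bigr)
$$
as the integrand, which is the stated formula once the domain of integration is dealt with below.

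For $I_2$, I would substitute parts~(2) and~(3) of \cref{lem:pqharm} and again cancel the leading $\tfrac12$ against the factors of $2$ arising in both formulas.  This gives
$$
\w^p(\s)\bigl(\<\nabla\rho,\nabla\s\>+q\,\<\a,\nabla\rho\>+q\,\<\nab{\nabla F}\s,\rho\>\bigr)
$$
as the integrand.  By symmetry and bilinearity of the fibre metric, the first two summands combine to $\<\nabla\s+q\a,\nabla\rho\>$, producing the displayed form.  Finally, since $\s_t=\s$ on $M\ssm U$, the variation field $\rho$ (and hence $\nabla\rho$) vanishes off $\bar U$; both integrands therefore vanish outside $U$, so the domain may be extended from $U$ to $M$ with no change in value, matching the statement.

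There is no conceptual obstacle: everything is forced once \cref{lem:pqharm} is in place.  The only care required is bookkeeping---correctly tracking the indicator $\e=\pm1$ arising from the absolute value in the definition of $\w$, and making sure the numerical factors $\tfrac12$, $-2p$ and $q$ combine into the precise coefficients displayed in $I_1$ and $I_2$.
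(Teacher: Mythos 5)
Your proposal is correct and matches the paper's (implicit) argument exactly: the proposition is a direct substitution of the three parts of \cref{lem:pqharm} into the integrals $I_1$ and $I_2$ defined just before that lemma, with the factors of $\tfrac12$ and $2$ cancelling as you describe and the domain extended from $U$ to $M$ because $\rho$ is supported in $\bar U$. No gaps.
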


\medskip
We now recall that if $\b$ is an $\E$-valued $1$-form on $M$, and $f\colon M\to\R$ a smooth function, then the following identity holds:
\begin{equation}
\nabla^*(f\b)= f\,\nabla^*\b-\b(\nabla{f}), 
\label{eq:codifffun}
\end{equation} 
where $\nabla^*\b=-\trace{\nabla\b}$, the pseudo-Riemannian codifferential.  

\medskip
\begin{lem}\label{lem:pqharm2}
The codifferential of $\g=\w^p(\s)(\nabla\s+q\a)$ is:
\begin{align*}
\nabla^*\g=\w^p(\s)\bigl(\nabla^*\nabla\s
+q((\Delta{F})\s-\nab{\nabla F}\s)\bigr)
+2p\/\e\,\w^{p+1}(\s)
\bigl(\nab{\nabla F}\s+q\/g(\nabla{F},\nabla{F})\s\bigr),
\end{align*}
where $\nabla^*\nabla=-\trace\nabla^2$ is the rough Laplacian, and $\Delta{F}= -\diverge{\nabla F}$ is the pseudo-Riemannian Laplace-Beltrami operator.
\end{lem}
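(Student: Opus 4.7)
\emph{Plan.} The approach is to write $\g=f\b$ with $f=\w^p(\s)$ and $\b=\nabla\s+q\a$, apply the identity \eqref{eq:codifffun}, and then compute $\nabla^*\b$ and $\b(\nabla f)$ separately.

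For $\nabla^*\b$, linearity gives $\nabla^*\b=\nabla^*\nabla\s+q\nabla^*\a$; the first summand is by definition the rough Laplacian. For $\a=dF\otimes\s$, I would apply the Leibniz rule
\[
\nabla_X(dF\otimes\s)=(\nabla_X dF)\otimes\s+dF\otimes\nabla_X\s,
\]
trace against a $g$-orthonormal frame $\{E_i\}$ using the indicator symbols \eqref{eq:indicator}, and invoke the standard identities $\nabla^* dF=\Delta F$ and $\sum_i \e_i\, dF(E_i)E_i=\nabla F$ to obtain $\nabla^*\a=(\Delta F)\s-\nabla_{\nabla F}\s$. Multiplying by $\w^p(\s)$ reproduces the first bracketed term in the stated formula.

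For $\b(\nabla f)$, I would start from $\w^p(\s)=|1+2F|^{-p}$ and apply the chain rule, noting that $d|1+2F|=2\e\,dF$ with $\e=(1+2F)/|1+2F|$ locally constant on each component of $\s\inv(\E')$ (i.e.\ on each side of the sphere bundle $S\E(-1)$). This yields $d[\w^p(\s)]=-2p\/\e\,\w^{p+1}(\s)\,dF$, hence $\nabla[\w^p(\s)]=-2p\/\e\,\w^{p+1}(\s)\nabla F$. Evaluating $\b$ on this vector, and using $\a(\nabla F)=dF(\nabla F)\s=g(\nabla F,\nabla F)\s$, one gets
\[
\b(\nabla f)=-2p\/\e\,\w^{p+1}(\s)\bigl(\nabla_{\nabla F}\s+q\,g(\nabla F,\nabla F)\s\bigr).
\]
Substituting both expressions into \eqref{eq:codifffun}, the minus sign in front of $\b(\nabla f)$ flips the sign of the $\w^{p+1}$ contribution and the stated identity drops out.

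The computation is essentially routine; the one point requiring genuine pseudo-Riemannian vigilance is the differentiation of $|1+2F|$ in the definition of $\w$, which is the sole source of the indicator symbol $\e$. This is the single structural departure from the Riemannian derivation in \cite{BLW1}, where $1+2F$ is automatically positive and $\e$ does not appear.
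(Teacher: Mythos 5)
Your proposal is correct and follows the same route as the paper: both take $f=\w^p(\s)$, $\b=\nabla\s+q\a$ in \eqref{eq:codifffun}, compute $\nabla f=-2p\/\e\,\w^{p+1}(\s)\nabla F$, and use $\nabla^*\a=(\Delta F)\s-\nab{\nabla F}\s$. You merely spell out the two auxiliary computations (the Leibniz-rule trace for $\nabla^*\a$ and the chain rule producing $\e$) that the paper states without proof, and your identification of the differentiation of $\lvert 1+2F\rvert$ as the sole source of the indicator symbol is exactly right.
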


\begin{proof}
Take $\b=\nabla\s+q\a$ and $f=\w^p(\s)$ in \eqref{eq:codifffun}.  Then:
$$
\nabla{f}=-2p\/\e\,\w^{p+1}(\s)\,\nabla{F},
$$
hence:
\begin{align*}
\nabla^*\g=\nabla^*(f\b) 
&=\w^p(\s)(\nabla^*\nabla\s+q\/\nabla^*\a)
+2p\/\e\,\w^{p+1}(\s)\bigl(\nab{\nabla F}\s
+qg(\nabla{F},\nabla{F})\s\bigr).
\end{align*}
Finally note that:
\[
\nabla^*\a=(\Delta{F})\s-\nab{\nabla{F}}\s. 
\qedhere
\]
\end{proof}

\medskip
We are now in a position to derive the Euler-Lagrange equations for $(p,q)$-harmonic sections.

\medskip
\begin{thm}\label{thm:pqharm}
Let $\s$ be a section of pseudo-Riemannian vector bundle $\E\to M$ over a pseudo-Riemannian manifold, with pseudo-length not identically $-1$.  Then $\s$ is a $(p,q)$-harmonic section if and only if $\tau_{p,q}(\s)=0$, where $\tau_{p,q}(\s)$ is the following {\rm Euler-Lagrange section} of $\E$:
$$
\tau_{p,q}(\s)=T_p(\s)-\phi_{p,q}(\s)\s,
$$
with $T_p(\s)\in\Gamma(\E)$ and $\phi_{p,q}(\s)\colon M\to\R$ defined:
\begin{align*}
T_p(\s)
&=(1+2F)\nabla^*\nabla\s+2p\,\nab{\nabla{F}}\s,  \\[1ex]
\phi_{p,q}(\s)
&=p\<\nabla{\s},\nabla{\s}\>-pq\,g(\nabla{F},\nabla{F})-q(1+2F)\Delta{F}.
\end{align*}
\end{thm}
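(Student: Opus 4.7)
The plan is to combine Proposition~\ref{prop:pqharm1} with Lemma~\ref{lem:pqharm2} by integration by parts, and then invoke the fundamental lemma of the calculus of variations. The variational field $\rho$ is compactly supported in $U\subset\s\inv(\E')$, so no boundary terms arise. Concretely, I would first rewrite the ``$\nabla\rho$'' piece of $I_2$ as
\[
\int_M\w^p(\s)\<\nabla\s+q\a,\nabla\rho\>\,\volume(g)=\int_M\<\nabla^*\g,\rho\>\,\volume(g),
\]
where $\g=\w^p(\s)(\nabla\s+q\a)$, using that $\nabla^*$ is the formal adjoint of $\nabla$ with respect to $\<*,*\>$ and $g$. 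Substituting the formula for $\nabla^*\g$ from Lemma~\ref{lem:pqharm2} then gives $I_2$ as a sum of four integrals of $\<\cdot,\rho\>$-terms.

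The key simplification is that one of these terms, namely $-q\,\w^p(\s)\<\nab{\nabla F}\s,\rho\>$ coming from $\nabla^*\a=(\Delta F)\s-\nab{\nabla F}\s$, cancels exactly against the leftover piece $+q\,\w^p(\s)\<\nab{\nabla F}\s,\rho\>$ in $I_2$. Adding $I_1$, one is left with an expression of the shape
\[
I_1+I_2=\int_M\bigl(A\<\nabla^*\nabla\s,\rho\>+B\<\nab{\nabla F}\s,\rho\>+C\<\s,\rho\>\bigr)\volume(g),
\]
where $A=\w^p(\s)$, $B=2p\e\,\w^{p+1}(\s)$, and $C$ is a linear combination of $\w^p(\s)\Delta F$, $\e\w^{p+1}(\s)g(\nabla F,\nabla F)$, and $\e\w^{p+1}(\s)\<\nabla\s,\nabla\s\>$. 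At this stage the crucial algebraic identity is
\[
\e\,\w(\s)=\frac{1}{1+2F},
\]
which follows immediately from the definitions of $\e$ and $\w$ (with $F=\tfrac12\<\s,\s\>$), and which unifies the cases $\<\s,\s\>>-1$ and $\<\s,\s\><-1$. Using it to convert each $\e\,\w^{p+1}(\s)$ into $\w^p(\s)/(1+2F)$, one can pull out the common positive factor $\w^p(\s)/(1+2F)$ and recognise the bracketed combination as exactly $T_p(\s)-\phi_{p,q}(\s)\s=\tau_{p,q}(\s)$.

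Finally, since $\w^p(\s)/(1+2F)$ is nowhere zero on $\s\inv(\E')$ and the fibre metric $\<*,*\>$ is non-degenerate, the condition
\[
\int_M\frac{\w^p(\s)}{1+2F}\<\tau_{p,q}(\s),\rho\>\,\volume(g)=0\quad\text{for all compactly supported $\rho$}
\]
is equivalent to $\tau_{p,q}(\s)=0$ on $\s\inv(\E')$; because $\tau_{p,q}(\s)$ is tensorially defined on all of $M$, vanishing on this open dense set suffices. The main obstacle is bookkeeping: in the Riemannian case $1+2F>0$ so the factor $\e$ is absent, but here one must track it carefully through Lemma~\ref{lem:pqharm}(1) and Lemma~\ref{lem:pqharm2} in order to see that the signs conspire to produce the clean, $\e$-free expressions for $T_p(\s)$ and $\phi_{p,q}(\s)$ stated in the theorem.
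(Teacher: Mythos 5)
Your proof is correct and follows essentially the same route as the paper's: integrate by parts to introduce $\nabla^*\g$, substitute \cref{lem:pqharm2}, cancel the two $q\,\w^p(\s)\nab{\nabla F}\s$ terms, and use $\e\,\w(\s)=1/(1+2F)$ (the paper's version of this is $\e\lvert 1+2F\rvert=1+2F$) to extract the common nonvanishing factor $\e\,\w^{p+1}(\s)=\w^p(\s)/(1+2F)$ multiplying $\<\tau_{p,q}(\s),\rho\>$ before invoking $L^2_{\text{loc}}$-non-degeneracy. The only slips are cosmetic and do not affect the argument: that common factor is nonvanishing but not positive where $1+2F<0$, and $\s\inv(\E')$ is open but not automatically dense in $M$ (a point the paper's own proof also leaves implicit).
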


\begin{proof}
By \cref{prop:pqharm1}:
\begin{align*}
I_1&=-p\,\int_M \e\/\w^{p+1}(\s) 
\bigl\< \bigl(\< \nabla\s,\nabla\s\>+qg(\nabla F,\nabla F)\bigr)\s,\rho\bigr\>\volume(g),    \\
I_2&=\int_M \bigl\<\nabla^*\g+q\/\w^p(\s)\nab{\nabla F}\s,\rho\bigr\>\volume(g),\end{align*}
where we have used integration by parts to rewrite $V_2$ in divergence form.  Now by \cref{lem:pqharm2}, after a cancellation of terms:
\begin{align*}
I_2=\int_M \e\/\w^{p+1}(\s)
\bigl\<\e\lvert 1&+2F\rvert
(\nabla^*\nabla \s+q(\Delta{F})\s) \\
&+2p(\nab{\nabla{F}}\s 
+q\/g(\nabla{F},\nabla{F})\s),\rho\bigr\>\volume(g).
\end{align*}
Therefore:
$$
I_1+I_2=\int_M \e\,\w^{p+1}(\s)\<\tau_{p,q}(\s),\rho\>\volume(g),
$$
noting that: 
$$
\e\lvert 1+2F\rvert=1+2F.
$$
The result now follows from $L^2_{\text{loc}}$-non-degeneracy: if $\xi$ is a section of a pseudo-Riemannian vector bundle $\E\to M$, and 
$$
\int_U \<\xi,\rho\>\volume(g)=0
$$ 
for all relatively compact open $U\subset M$ and all $\rho\in\Gamma(\E)$ with support in $\bar U$, then $\xi=0$.
\end{proof}

\medskip  
\begin{rems}\label{rems:pqharm}\item{}
\begin{enumerate}[leftmargin=1.8em]

\item 
The Euler-Lagrange equations resolve the singularity in the vertical $(p,q)$-energy functional: they are valid on all of $M$, not just on $\s\inv\E'$.

\item 
If $\<\s,\s\>\equiv k\neq-1$ then the Euler-Lagrange equations reduce to:
$$
(1+k)\nabla^*\nabla\s=p\<\nabla\s,\nabla\s\>\s.
$$
If $k\neq0$ and $p=1+1/k$ then this is the equation for $\s$ to be a harmonic section of the sphere bundle $S\E(k)$ equipped with the restriction of the Sasaki metric.  Thus, for all $k\neq -1,0$, harmonic sections of $S\E(k)\to M$ are precisely the $(p,q)$-harmonic sections of $\E$ of constant pseudo-length $k$, for $p=1+1/k$ and all $q\in\R$.

\item 
If $\<\s,\s\>\equiv-1$ (ie.\ $\s\inv\E'=\emptyset$) then $T_p(\s)\equiv0$ and $\phi_{p,q}(\s)=p\<\nabla\s,\nabla\s\>$.  We therefore extend the terminology and decree that $\s$ is $(0,q)$-harmonic for all $q\in\R$.

\item 
If $\s$ is parallel then $\s$ is $(p,q)$-harmonic for all $(p,q)$.\end{enumerate}
\end{rems}

\medskip
The following definition generalises that of \cite{BLW2}.

\medskip
\begin{defn}\label{defn:preharm}
A section $\s$ of a pseudo-Riemannian vector bundle over a pseudo-Riemannian manifold is said to be {\sl $p$-preharmonic\/} if $T_p(\s)$ is pointwise collinear with $\s$, and {\sl preharmonic\/} if $\s$ is $p$-preharmonic for all $p$.\end{defn}

\medskip
Preharmonicity means:
\begin{enumerate}[leftmargin=1.5em]
\item[i)] 
There exists a smooth function $\nu\colon M\to\R$ such that $\nabla^*\nabla\s=\nu\s$; for example, if $\s$ is an eigenfunction of the rough Laplacian.
\item[ii)] 
There exists a smooth function $\zeta\colon M\to\R$ such that $\nab{\nabla F}\s=\z\/\s$.
\end{enumerate}
As in \cite{BLW2}, we refer to $\z$ as the {\sl spinnaker\/} of $\s$.  The following result is a direct generalisation of the Riemannian version used in \cite{BLW2}.

\medskip
\begin{thm}\label{prop:preharm}
Let $\s$ be a preharmonic section of a pseudo-Riemannian vector bundle over a pseudo-Riemannian manifold.  Then $\s$ is a $(p,q)$-harmonic section if and only if:
$$
(p+q+2qF)\/\Delta{F}+2p(1+qF)\/\z+(1+2(1-p)F)\/\nu=0.
$$
\end{thm}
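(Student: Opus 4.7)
The plan is to plug preharmonicity directly into the Euler--Lagrange section $\tau_{p,q}(\s)=T_p(\s)-\phi_{p,q}(\s)\s$ of \cref{thm:pqharm} and rearrange, reducing everything to a single scalar equation in $\nu$, $\z$, $F$, and $\Delta F$.

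First I would use conditions (i) and (ii) of preharmonicity to collapse $T_p(\s)$: since $\nabla^*\nabla\s=\nu\s$ and $\nab{\nabla F}\s=\z\s$, the definition of $T_p$ gives $T_p(\s)=\bigl((1+2F)\nu+2p\z\bigr)\s$. Thus $\tau_{p,q}(\s)$ is itself a scalar multiple of $\s$, so the harmonicity condition $\tau_{p,q}(\s)=0$ is equivalent to the scalar identity
\begin{equation*}
\phi_{p,q}(\s)=(1+2F)\nu+2p\z
\end{equation*}
(trivially satisfied where $\s$ vanishes).

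The substance of the argument is to re-express the two kinematic quantities $g(\nabla F,\nabla F)$ and $\<\nabla\s,\nabla\s\>$ appearing in $\phi_{p,q}(\s)$ in terms of $\nu$, $\z$, $F$, and $\Delta F$. For the first, differentiating $F=\tfrac12\<\s,\s\>$ gives $dF(X)=\<\nab X\s,\s\>$, so
\begin{equation*}
g(\nabla F,\nabla F)=dF(\nabla F)=\<\nab{\nabla F}\s,\s\>=2\z F,
\end{equation*}
by condition (ii). For the second, I would prove a Bochner-type identity: expanding $\nabla^2 F(X,X)$ as $\<\nab X\nab X\s,\s\>+\<\nab X\s,\nab X\s\>-\<\nab{\nabla_X X}\s,\s\>$ and tracing against an orthonormal frame with indicators $\e_i$ yields
\begin{equation*}
\Delta F=\<\nabla^*\nabla\s,\s\>-\<\nabla\s,\nabla\s\>=2\nu F-\<\nabla\s,\nabla\s\>,
\end{equation*}
where condition (i) is used in the last step. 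Hence $\<\nabla\s,\nabla\s\>=2\nu F-\Delta F$.

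Substituting both identities into the formula for $\phi_{p,q}(\s)$, the harmonicity condition becomes
\begin{equation*}
p(2\nu F-\Delta F)-pq(2\z F)-q(1+2F)\Delta F=(1+2F)\nu+2p\z,
\end{equation*}
and collecting the coefficients of $\Delta F$, $\z$, and $\nu$ produces exactly
\begin{equation*}
(p+q+2qF)\Delta F+2p(1+qF)\z+(1+2(1-p)F)\nu=0,
\end{equation*}
as required. The only delicate step is the Bochner-type identity for $\Delta F$: the signs must be tracked carefully using the pseudo-Riemannian conventions $\Delta F=-\trace\nabla^2 F$ and $\nabla^*\nabla=-\trace\nabla^2$, with the indicator symbols \eqref{eq:indicator} in both traces. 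Once this is in hand, the rest is algebraic bookkeeping.
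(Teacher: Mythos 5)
Your proposal is correct and follows essentially the same route as the paper: the paper's (one-line) proof likewise combines the Euler--Lagrange equation of \cref{thm:pqharm} with the Weitzenb\"ock identity \eqref{eq:weitz}, which is exactly the ``Bochner-type identity'' you derive, together with the implicit use of $g(\nabla F,\nabla F)=2\z F$. Your write-up simply makes explicit the substitutions and bookkeeping that the paper leaves to the reader.
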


\begin{proof}
This follows from \cref{thm:pqharm} and the Weitzenb{\"o}ck identity:\begin{equation}
\< \nabla^*\nabla\s,\s\>=\<\nabla\s,\nabla\s\>+\Delta{F}, 
\label{eq:weitz}
\end{equation}
which continues to hold in the pseudo-Riemannian case.
\end{proof}

\bigskip
\section{Harmonic vector fields and pseudo-Riemannian hyperquadrics}
\label{sec:HVF}

Henceforward we specialise to the case $\E=TM$ for a pseudo-Riemannian manifold $M$, with $\nabla$ the Levi-Civita connection and $\<*,*\>=g$, the pseudo-metric on $M$.  In this case, sections of $\E$ are of course vector fields on $M$.

\medskip
\begin{defn}\label{defn:hvf}
A vector field $\s$ on $(M,g)$ is said to be a {\sl harmonic vector field\/} if $\s$ is a $(p,q)$-harmonic section of the tangent bundle $TM$ for some $(p,q)$; otherwise said, the Euler-Lagrange vector field $\tau_{p,q}(\s)=0$ identically, by Theorem \ref{thm:pqharm}.  The pair $(p,q)$ are said to be {\sl metric parameters\/} for the harmonic vector field $\s$.
\end{defn}

\medskip
The metric parameters for a harmonic vector field need not be unique, even for vector fields of non-constant length.  This was observed in the Riemannian case \cite{BLW2}, and we will exhibit further non-Riemannian examples in \cref{thm:cgf}.
          
The natural action of the isometry group of $(M,g)$ on vector fields is via the push-forward construction:
$$
(\v.\s)(x)
=(\v_*\s)(x)
=d\varphi(\s(\v\inv(x))),
$$
for all isometries $\v$ and all $x\in M$. The vector fields $\s$ and $\v.\s$ are then said to be {\sl congruent.}  As in the Riemannian case, harmonic vector fields are determined up to congruence:

\medskip
\begin{thm}\label{thm:hvfcong}
Let $\s$ be a harmonic vector field on a pseudo-Riemannian manifold $(M,g)$, and let $\v$ be an isometry of $(M,g)$.  Then $\varphi.\sigma$ is also harmonic, with the same metric parameters.
\end{thm}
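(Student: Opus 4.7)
The plan is to reduce the theorem to the equivariance identity
$$
\tau_{p,q}(\v.\s) = \v.\tau_{p,q}(\s),
$$
which in view of \cref{thm:pqharm} immediately yields the result: if $\tau_{p,q}(\s)=0$ identically, then so is $\tau_{p,q}(\v.\s)$, with the same metric parameters $(p,q)$.

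To establish equivariance, I would exploit the fact that every ingredient appearing in the definitions of $T_p(\s)$ and $\phi_{p,q}(\s)$ --- namely the Levi-Civita connection, the pseudo-metric $g$, the induced function $F=\tfrac12\<\s,\s\>$, its gradient $\nabla F$, the Laplace-Beltrami operator $\Delta$, and the rough Laplacian $\nabla^*\nabla$ --- is natural with respect to isometries of $(M,g)$. Setting $\tilde\s=\v.\s$ and $\tilde F=\tfrac12\<\tilde\s,\tilde\s\>$, and using that $d\v$ is a bundle isometry intertwining $\nabla$, one obtains at corresponding points the identities
$$
\tilde F = F\circ\v\inv, \qquad \nabla^*\nabla\,\tilde\s = \v.(\nabla^*\nabla\s), \qquad \nab{\nabla\tilde F}\tilde\s = \v.(\nab{\nabla F}\s),
$$
together with $\Delta\tilde F=(\Delta F)\circ\v\inv$ and $\<\nabla\tilde\s,\nabla\tilde\s\> = \<\nabla\s,\nabla\s\>\circ\v\inv$.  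Substituting these into the formulae for $T_p$ and $\phi_{p,q}$ from \cref{thm:pqharm} yields $T_p(\tilde\s)=\v.T_p(\s)$ and $\phi_{p,q}(\tilde\s)=\phi_{p,q}(\s)\circ\v\inv$, from which the desired equivariance of $\tau_{p,q}$ follows at once.

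There is no substantive obstacle; the argument is entirely routine book-keeping once one recalls that an isometry of a pseudo-Riemannian manifold, just as in the Riemannian case, preserves every metrically-defined object.  A more conceptual alternative would be to observe that the bundle map $d\v\colon TM\to TM$ preserves both the Sasaki splitting $T(TM)=V\oplus H$ and the generalised Cheeger-Gromoll tensor $h_{p,q}$ --- since both are manufactured from $g$ and $\nabla$ alone --- and therefore preserves the local vertical $(p,q)$-energy functional: $E^v_{p,q}(\v.\s;\v(U))=E^v_{p,q}(\s;U)$.  Since $\v$ carries admissible variations of $\s$ bijectively to admissible variations of $\v.\s$, critical points are preserved, and the theorem follows directly from \cref{defn:pqharm}.
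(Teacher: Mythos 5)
Your main argument is correct and is essentially the paper's own proof: the paper likewise reduces the theorem to the equivariance identity $\tau_{p,q}(\v.\s)=\v.\tau_{p,q}(\s)$, justified by the single observation that pseudo-Riemannian isometries are totally geodesic ($\nabla d\v=0$), which is exactly the intertwining property you spell out term by term. Your alternative variational argument via invariance of $E^v_{p,q}$ is also sound but is not the route the paper takes.
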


\begin{proof}
Pseudo-Riemannian isometries are totally geodesic: $\nabla d\varphi=0$.  It then follows from \cref{thm:pqharm} that:
\[
\tau_{p,q}(\v.\s)=\v.\tau_{p,q}(\s).
\qedhere
\]
\end{proof}

\medskip
\begin{rem}\label{rem:hvf}
Although harmonic vector fields are invariant under isometries, in general (perhaps surprisingly) they are not invariant under homotheties; examples of this were already noted in \cite{BLW1}.  Consequently, when solving the Euler-Lagrange equations scale factors play a non-trivial r\^ole.
\end{rem}

\medskip
Recall that a {\sl space form\/} is a simply-connected complete pseudo-Riemannian manifold of constant sectional curvature, and two space forms are isometric if and only if they have the same dimension, index and sectional curvature \cite[Proposition 8.23]{ON}.  For computational and geometric purposes we work with hyperquadric models, which in some cases are only locally isometric to the corresponding space form.  Let $\R^{n+1}_u$ denote pseudo-Euclidean space of index $u\in\{0,\dots,n+1\}$, with inner product:
\begin{equation}
\<x,y\>= x_1y_1+\cdots+x_{n+1-u}\/y_{n+1-u}-\cdots-x_{n+1}\/y_{n+1},
\label{eq:pseudoeuc}
\end{equation}
and let $Q\colon \R^{n+1}_u\to\R$ be the associated quadratic form: $Q(x)=\<x,x\>$.

\medskip
\begin{defn}\label{defn:pseudo}
The {\sl pseudo-sphere\/} (resp.\ {\sl pseudo-hyperbolic space\/}) of dimension $n\geqs2$, index $v\in\{0,\dots,n\}$ and radius $r>0$ is the hyperquadric:
\begin{gather*}
S^n_v(r)=\{x\in\R^{n+1}_v:Q(x)=r^2\} \\
\text{(resp.\ $H^n_v(r)=\{x\in\R^{n+1}_{v+1}:Q(x)=-r^2\}$)}
\end{gather*}
equipped with the induced metric.  The sectional curvature is $1/r^2$ (resp.\ $-1/r^2$).
\end{defn}

\medskip\noindent
All the hyperquadrics are connected, except the extreme cases $S^n_n(r)$ and $H^n_0(r)$ which have precisely two connected components \cite[Ch.\ 4, Lemma 25]{ON}, one of which is normally discarded.  The pseudo-spheres and pseudo-hyperbolic spaces of unit radius are abbreviated $S^n_v$ and $H^n_v$, respectively.

We recall also that a diffeomorphism $\v\colon(M,g)\to(N,h)$ of pseudo-Riemannian manifolds is an {\sl anti-isometry\/} if $\v^*h=-g$.
Note that for two pseudo-Riemannian $n$-manifolds to be anti-isometric the sum of their indices must equal $n$.  The pseudo-Euclidean anti-isometry:
\begin{equation}
\v\colon \R^{n+1}_{v+1}\to\R^{n+1}_{n-v};\; 
\v(x_1,\ldots,x_{n+1})=(x_{n+1-v},\ldots,x_{n+1}, x_1, \ldots, x_{n-v})
\label{eq:antisom}
\end{equation}
carries $H^n_v(r)$ anti-isometrically onto $S^n_{n-v}(r)$; its restriction is the {\sl canonical anti-isometry\/} between these two hyperquadrics \cite[Ch.\ 4, Lemma 24]{ON}.  (The slight difference with \cite{ON} arises from our definition \eqref{eq:pseudoeuc} of the pseudo-Euclidean inner product.)  In pseudo-Riemannian geometry anti-isometric spaces are often considered to be identical.  However, although anti-isometries are totally geodesic, from the viewpoint of harmonic vector fields they are not so natural, essentially because the term $1+2F$ in the Euler-Lagrange vector field $\tau_{p,q}(\s)$ (see \cref{thm:pqharm}) is not invariant.  Thus if $\s$ is a harmonic vector field on $(M,g)$ and $\v\colon (M,g)\to(N,h)$ is an anti-isometry then the push-forward $\v_*\s$ need not be a harmonic vector field on $(N,h)$; a concrete example is given in \cref{sec:HCGF} (see \cref{eg:cgf}).

\bigskip
\section{Harmonic conformal gradient fields}\label{sec:HCGF}
    
The construction of conformal gradient fields on Riemannian space forms generalises to pseudo-Riemannian hyperquadrics.  Let $M=S^n_v$ or $M=H^n_v$, and let $\mathbb{V}$ denote the appropriate ambient pseudo-Euclidean space (see Definition \ref{defn:pseudo}).  Note that the equation of the hyperquadric is $\<x,x\>=\e$ where $\e=\pm 1$ is the sectional curvature. Let $a\in \mathbb{V}$ have pseudo-length
$$
\mu=\<a,a\>,
$$ 
and let $\a\colon M\to \R$ be the restriction to $M$ of the covector metrically dual to $a$:
$$
\a(x)=\<x,a\>,
$$
for all $x\in M$.  The {\sl conformal gradient field\/} $\s$ on $M$ with {\sl pole vector\/} $a$ is then defined:
$$
\s=\grad{\a}=\nabla\a,
$$
where the gradient is, of course, that intrinsic to the hyperquadric.  We now record some relevant properties of pseudo-Riemannian conformal gradient fields, computations of which are essentially identical to those given in \cite[Section 3]{BLW2}.

\medskip
\begin{prop}\label{prop:cgf}
Let $\s$ be a conformal gradient field on $M$, with pole vector $a$. Then for all $x\in M$ and $X,Y\in T_xM$:
\begin{enumerate}[leftmargin=1.75em, itemsep=0.5ex]
\item[\rm{(1)}] 
$\s(x)=a-\e\/\a(x)x.$
\item[\rm{(2)}] 
$2F=\<\s,\s\>=\mu-\e\a^2.$
\item[\rm{(3)}] 
$\nab X\s=-\e\/\a\/ X.$
\item[\rm{(4)}] 
$\nabsq{X}{Y}\s=-\e \<\s,X\> Y.$
\end{enumerate}
\end{prop}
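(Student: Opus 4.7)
The plan is to prove the four assertions in order, each flowing from the previous, working at the ambient pseudo-Euclidean level and projecting onto $TM$ via the Gauss formula. Throughout I will use that the position vector $x$ is a (pseudo-)normal to $M$ with $\langle x,x\rangle=\e$, so that for any $v\in\mathbb{V}$ the tangential projection at $x$ is
\[
v^{\top}=v-\e\langle v,x\rangle x,
\]
since $\e^{2}=1$.

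For (1), the function $\a=\langle*,a\rangle$ is the restriction to $M$ of a linear function on $\mathbb{V}$ whose ambient gradient is the constant vector $a$. The intrinsic gradient on $M$ is the tangential component of this ambient gradient, and applying the projection formula above gives $\s=a-\e\/\a(x)\/x$. Assertion (2) is then a one-line expansion of $\langle\s,\s\rangle$ using $\langle a,a\rangle=\mu$, $\langle a,x\rangle=\a$, and $\langle x,x\rangle=\e$; two of the three terms collapse thanks to $\e^{2}=1$, leaving $\mu-\e\/\a^{2}$.

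For (3), I would differentiate the formula in (1) with respect to $X\in T_xM$ using the flat ambient connection $\bar\nabla$: since $a$ is constant and $\bar\nabla_X x=X$,
\[
\bar\nabla_X\s=-\e\bigl((X\a)\/x+\a\/X\bigr)=-\e\langle X,a\rangle\/x-\e\/\a\/X.
\]
The first summand is (pseudo-)normal to $M$ and the second is tangent, so the Gauss formula identifies the intrinsic covariant derivative $\nab X \s$ with $-\e\/\a\/X$. For (4), I would plug (3) into the definition $\nabsq{X}{Y}\s=\nab X(\nab Y\s)-\nab{\nab X Y}\s$, giving
\[
\nabsq{X}{Y}\s=-\e\bigl((X\a)\/Y+\a\/\nab X Y\bigr)+\e\/\a\/\nab X Y=-\e(X\a)\/Y.
\]
The key simplification is that $X\a=\langle X,a\rangle$, and since $X\perp x$ on $M$ this equals $\langle X,a-\e\a x\rangle=\langle X,\s\rangle$, yielding $\nabsq{X}{Y}\s=-\e\langle\s,X\rangle Y$.

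The main (though mild) obstacle is keeping the sign conventions under control: one must distinguish the sectional curvature $\e=\pm1$ from the global indicator symbols of \eqref{eq:indicator} and verify that the projection and Gauss-type formulas take the stated form whether $M$ is a pseudo-sphere or a pseudo-hyperbolic space. Once one observes that $\langle x,x\rangle=\e$ encodes both cases uniformly and that $\e^{2}=1$, the computations proceed identically to the Riemannian case in \cite[Section 3]{BLW2}, with no further subtlety from the indefinite signature.
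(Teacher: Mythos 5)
Your proposal is correct: the paper itself gives no written proof of this proposition, deferring to the computations of \cite[Section 3]{BLW2} with the remark that they carry over essentially unchanged, and your argument is exactly that standard computation (tangential projection of the constant ambient gradient $a$, expansion of $\<\s,\s\>$, differentiation via the flat ambient connection and the Gauss formula, and the identification $X\a=\<X,a\>=\<X,\s\>$), with the signs handled correctly through $\<x,x\>=\e$ and $\e^2=1$. Nothing further is needed.
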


\medskip
\begin{rems}\label{rems:cgf}
\item{}
\begin{enumerate}[leftmargin=1.8em]

\item 
By \cref{prop:cgf}\,(1), if $\v\colon H^n_v\to S^n_{n-v}$ is the canonical anti-isometry, and $\s$ is a conformal gradient field on $H^n_v$ with pole vector $a$, then $\v_*\s$ is a conformal gradient field on $S^n_{n-v}$ with pole vector $\v(a)$.

\item
It follows from \cref{prop:cgf}\,(2) that $\s(x)$ is a null vector if and only if $\e\/\mu>0$ and $x=\pm a/\sqrt{\lvert\mu\rvert}$. But then $\s(x)=0$ by (1).  Therefore $\s$ is either space-like or time-like, although it is not possible to discern which from the signs of $\mu$ and $\e$. If $\e\/\mu<0$ then $\s$ has no zeros.
\end{enumerate}
\end{rems}

\medskip
\begin{prop} \label{prop:cgfpreharm}
If $\s$ is a conformal gradient field then $\s$ is preharmonic, with 
$\nu=\e$ and spinnaker $\z= \e(\mu-2F)$.
\end{prop}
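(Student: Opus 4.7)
The plan is to verify directly the two conditions listed after \cref{defn:preharm}: that $\nabla^*\nabla\s = \nu\s$ and $\nab{\nabla F}\s = \z\s$ for the claimed functions $\nu$ and $\z$. The computation is essentially a one-step consequence of parts (3) and (4) of \cref{prop:cgf}, together with the formula $2F = \mu - \e\a^2$ from part (2).

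First I would handle the rough Laplacian. Pick a $g$-orthonormal local tangent frame $\{E_i\}$ with indicators $\e_i$ as in \eqref{eq:indicator}. By definition $\nabla^*\nabla\s = -\sum_i \e_i\/\nabsq{E_i}{E_i}\s$. Substituting \cref{prop:cgf}\,(4) gives
\[
\nabla^*\nabla\s = \e\sum_i \e_i\<\s,E_i\>E_i = \e\/\s,
\]
since $\sum_i \e_i\<\s,E_i\>E_i$ is the expansion of $\s$ in the orthonormal frame. This establishes condition (i) with $\nu=\e$.

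Next I would compute $\nabla F$. From \cref{prop:cgf}\,(2), $F = \tfrac12(\mu-\e\a^2)$, so $\nabla F = -\e\a\nabla\a = -\e\a\/\s$, using that $\s = \nabla\a$ by definition of a conformal gradient field. Then from \cref{prop:cgf}\,(3), applied with $X = \nabla F = -\e\a\/\s$ and using the tensoriality of $\nabla_X\s$ in $X$,
\[
\nab{\nabla F}\s = -\e\a\/\nab{\s}\s = -\e\a(-\e\a\s) = \a^2\s.
\]
Since $\e\a^2 = \mu - 2F$, rewriting gives $\a^2 = \e(\mu - 2F)$, so $\nab{\nabla F}\s = \e(\mu-2F)\/\s$, establishing condition (ii) with $\z = \e(\mu-2F)$.

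There is no real obstacle here beyond bookkeeping of signs; the only subtlety is remembering the pseudo-Riemannian sign $\e_i$ in the trace defining $\nabla^*\nabla$, but this cancels cleanly because each term in \cref{prop:cgf}\,(4) is already paired with a factor $\<\s,E_i\>E_i$ that reassembles $\s$ regardless of signature. Together these two computations show that $T_p(\s) = (1+2F)\nu\s + 2p\/\z\/\s$ is pointwise collinear with $\s$ for every $p$, so $\s$ is preharmonic with the stated $\nu$ and $\z$.
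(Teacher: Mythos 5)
Your proof is correct and follows essentially the same route as the paper: compute $\nabla^*\nabla\s=\e\s$ from \cref{prop:cgf}\,(4), then $\nabla F=-\e\a\s$, and apply \cref{prop:cgf}\,(3) with tensoriality to get $\nab{\nabla F}\s=\a^2\s=\e(\mu-2F)\s$. The sign bookkeeping, including the conversion $\a^2=\e(\mu-2F)$, is all handled correctly.
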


\begin{proof}
We calculate:
\begin{align*}
\nabla^*\nabla\s
&=-\trace{\nabla^2\s}
=-\ts\sum_i\e_i\,\nabla^2_{E_i,E_i}\s \\
&=\ts\sum_i\e_i\,\e\<\s,E_i\> E_i,
\quad\text{by \cref{prop:cgf}\,(4)} \\
&=\e\/\s,
\end{align*}
hence $\nu=\e$.  Furthermore:
\begin{align}
\nabla{F}
&=-\e\/\a \ts\sum_i\e_i\<\s,E_i\> E_i
=-\e\/\a\/\s. 
\label{eq:cgfgrad}
\end{align}
Therefore by \cref{prop:cgf}\,(3):
\begin{align*}
\nab{\nabla{F}}\s
&=-\e\/\a\,\nabla{F}=\a^2\s=\e(\mu-2F)\s,
\end{align*}
hence $\z=\e(\mu-2F)$.
\end{proof}

\medskip
\begin{thm}\label{thm:cgf}
Let $\s$ be a conformal gradient field on a pseudo-Riemannian hyperquadric, whose pole vector has pseudo-length $\mu\in\R$.  
\begin{itemize}[leftmargin=1.7em]
\item[\rm(1)]
If $\mu\geqs 0$ then $\s$ is $(p,q)$-harmonic if and only if:
$$
n>2, \quad \mu =1/(n-2), \quad p=n+1, \quad q=2-n.
$$
\item[\rm(2)]
If $\mu<0$ then $\s$ is $(p,q)$-harmonic if and only if $\mu=-1$ and either:
$$
p=n+1, \quad q=\frac{1+n-n^2}{n},
$$
or:
$$
n>2, \quad p=1/(2-n), \quad q=0.
$$
\end{itemize}
\end{thm}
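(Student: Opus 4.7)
The plan is to exploit the preharmonicity of $\s$ established in \cref{prop:cgfpreharm} and apply \cref{prop:preharm} directly. Since $\nu = \e$ and $\z = \e(\mu - 2F)$, the $(p,q)$-harmonicity equation contains only one analytic ingredient still to be computed, namely $\Delta F$. Using $\nabla F = -\e\/\a\/\s$ from \eqref{eq:cgfgrad}, the identity $\diverge \s = -n\/\e\/\a$ (immediate from \cref{prop:cgf}\,(3)), and $\a^2 = \e(\mu - 2F)$ from \cref{prop:cgf}\,(2), a short calculation yields
\begin{equation*}
\Delta F = \e\bigl(2(n+1)F - n\/\mu\bigr).
\end{equation*}

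Substituting $\Delta F$, $\z$ and $\nu$ into the equation of \cref{prop:preharm}, dividing through by $\e$, and regrouping by powers of $F$ produces a polynomial identity of degree $2$ in $F$.  Since the linear functional $\a(x) = \<x, a\>$ is non-constant on the hyperquadric (assuming $a \neq 0$, otherwise $\s\equiv 0$ is trivially harmonic), $F = (\mu - \e\/\a^2)/2$ is non-constant, so the identity holds on all of $M$ if and only if each of its three coefficients vanishes.  These simplify to
\begin{align*}
q\/(n+1-p) &= 0, \\
(n-2)p + (n+1)q + q\/\mu\/(p-n) + 1 &= 0, \\
\mu\bigl((2-n)p - n\/q\bigr) + 1 &= 0.
\end{align*}

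Solving this system is the main bookkeeping task, its structure dictated by the dichotomy in the first equation.  If $q = 0$, the remaining two equations force $p = 1/(2-n)$ (so $n > 2$) and then $\mu = -1$, producing the second alternative of part~(2).  If instead $p = n+1$, eliminating $q$ between the middle equation (which rearranges to $q(n+1+\mu) = 1+n-n^2$) and the constant equation produces, after multiplying through by $n+1+\mu$ and dividing by the non-zero factor $-(n+1)$, the quadratic
\begin{equation*}
(n-2)\/\mu^2 + (n-3)\/\mu - 1 = 0.
\end{equation*}
Its discriminant simplifies to the perfect square $(n-1)^2$, so the roots are $\mu = 1/(n-2)$ (valid only when $n > 2$), paired with $q = 2-n$, and $\mu = -1$, paired with $q = (1+n-n^2)/n$; these give the remaining two solutions.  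The case $\mu = 0$ is instantly excluded by the third equation.  Partitioning the three solutions by the sign of $\mu$ yields the stated classification.

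The only genuine obstacle is verifying that the apparently cubic elimination in the $p = n+1$ branch collapses to a quadratic in $\mu$ with a perfect-square discriminant; everything else is a careful accounting of the powers of $F$ together with the two-way case split on whether $q = 0$ or $p = n+1$.
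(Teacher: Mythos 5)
Your proof is correct and follows the same route as the paper: preharmonicity (Proposition \ref{prop:cgfpreharm}) combined with Theorem \ref{prop:preharm}, the computation $\Delta F=\e\bigl(2(n+1)F-n\mu\bigr)$, and the reduction to a quadratic polynomial identity in the non-constant function $F$ whose three coefficients must each vanish. The only difference is that the paper dispatches the resulting system by observing it is the same polynomial as in the Riemannian case and citing \cite[Theorem 3.2]{BLW2}, whereas you solve it explicitly; your coefficient equations, the elimination leading to $(n-2)\mu^2+(n-3)\mu-1=0$ with discriminant $(n-1)^2$, and the resulting solution set all check out.
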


\begin{proof}
Since $\s$ is preharmonic the harmonic equations simplify to those of \cref{prop:preharm} with $\nu=\e$ and $\z=\e(\mu-2F)$.  By \cref{prop:cgf} and \eqref{eq:cgfgrad} the Laplacian of $F$ is:
\begin{align*}
\Delta{F}
&=-\diverge{\nabla{F}}
=\e\<\s,\s\>-n\a^2=2\e\/F(1+n)-\e\/n\mu.
\end{align*}
Therefore the harmonic equations reduce to the following polynomial in $F$:\begin{align*}
0&=(p+q+2qF)(2(1+n)F-n\mu)+2p(1+qF)(\mu-2F)+1+2(1-p)F.
\end{align*}
This is in fact the same polynomial that appears in the Riemannian case \cite[Theorem 3.2]{BLW2}, and the analysis proceeds in the same way.
\end{proof}

\medskip
It is interesting to note that \cref{thm:cgf} does not depend on the curvature of the hyperquadric. However it does depend on the index of the ambient space: if this is strictly positive (resp.\ negative) definite then necessarily $\mu>0$ (resp.\ $\mu<0$).  In particular, this precludes the existence of harmonic conformal gradient fields on the Riemannian $2$-sphere.  It should also be noted that although harmonic conformal gradient fields are metrically unique if $\mu>0$, if $\mu<0$ and $n>2$ there are two sets of metric parameters.  However if $n=2$ the metric parameters are unique, and equal to $(3,-1/2)$ for all quadrics (other than the Riemannian $2$-sphere).

\par
Finally we note that harmonic conformal gradient fields are uniquely determined up to congruence by the pseudo-length of the pole vector:      

\medskip
\begin{thm}\label{thm:cgfcong}
The congruence class of a conformal gradient field on a pseudo-Riemannian hyperquadric is determined by the pseudo-length of its pole vector.
\end{thm}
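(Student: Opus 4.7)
The plan is to realise the congruence by an ambient linear isometry. Given two pole vectors $a,b\in\mathbb{V}$ of the same pseudo-length $\mu$, I would produce $\varphi\in O(\mathbb{V})$ with $\varphi(a)=b$, observe that its restriction to the hyperquadric $M$ is an isometry of $(M,g)$, and verify that the push-forward carries $\s_a$ to $\s_b$, where $\s_a,\s_b$ denote the conformal gradient fields with pole vectors $a,b$ respectively.

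The existence of $\varphi$ is a consequence of Witt's extension theorem applied to the non-degenerate quadratic space $\mathbb{V}$: the map $\operatorname{span}(a)\to\operatorname{span}(b)$ sending $a\mapsto b$ is an isometry of one-dimensional subspaces, since $\<a,a\>=\<b,b\>=\mu$, and therefore extends to an element of $O(\mathbb{V})$. Equivalently, $O(\mathbb{V})$ acts transitively on each level set of the quadratic form $Q$. Any such $\varphi$ preserves $Q$, hence stabilises the hyperquadric $M=\{Q=\pm 1\}$, and its restriction to $M$ is an isometry of $(M,g)$ with the induced metric.

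The final verification is a short calculation. For $y\in M$, setting $x=\varphi\inv(y)$ and using that the differential of the linear map $\varphi$ at any point is $\varphi$ itself, \cref{prop:cgf}\,(1) yields
\[
(\varphi.\s_a)(y)=\varphi\bigl(\s_a(x)\bigr)=\varphi(a)-\e\<x,a\>\varphi(x)=b-\e\<y,b\>y=\s_b(y),
\]
where the third equality uses $\varphi(a)=b$ and the $\varphi$-invariance of $\<*,*\>$. The main (and only substantive) obstacle is the Witt-type transitivity invoked above. A minor technical subtlety arises for the extreme hyperquadrics $S^n_n$ and $H^n_0$, which have two connected components, one of which is conventionally discarded: if the $\varphi$ produced by Witt happens to swap these components, then one either pre-composes with an element of the stabiliser of $a$ that exchanges components, or (in the remaining degenerate case) notes that $\s_{-b}=-\s_b$ is itself congruent to $\s_b$ via the antipodal map on the ambient space when it preserves $M$; either way, such adjustments lie in $O(\mathbb{V})$ and do not disturb the computation above.
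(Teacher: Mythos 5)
Your proposal is correct and follows essentially the same route as the paper: both realise the congruence by an ambient orthogonal transformation carrying one pole vector to the other and restricting it to the hyperquadric, the paper verifying the push-forward identity via the potential $\a$ and its gradient where you instead use the explicit formula of \cref{prop:cgf}\,(1). Your explicit appeal to Witt's theorem and your discussion of the two-component quadrics make precise what the paper leaves implicit in its choice of $\Phi\in O^{++}(n+1,u)$, but this is a refinement of the same argument rather than a different one.
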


\begin{proof}
Let $\s, \tilde\s$ be conformal gradient fields with pole vectors $a,\tilde{a}$ respectively, such that $\mu=\tilde{\mu}$.  There exists an ambient isometry $\Phi\in O^{++}(n+1,u)$, where $u$ is the index of $\mathbb{V}$, such that $\Phi(a)=\tilde{a}$.  The potential $\tilde\a$ is:
$$
\tilde\a(x)=\<\tilde{a},x\>=\<\Phi(a),x\>=\<a,\Phi\inv(x)\>;
$$
thus:
$$
\tilde\a=\a\circ\Phi\inv.
$$
For all $X\in T_x M$:
\begin{align*}
\<\nabla\tilde\a,X\> 
&=d\tilde\a(X)=d\a(d\Phi\inv(X))
=\<\nabla\a, d\Phi\inv(X)\> \\
&=\<\nabla\a,\Phi\inv(X)\>
=\<\Phi(\nabla\a), X\>
=\<d\Phi(\nabla\a),X\>,
\end{align*}
where $\nabla\a$ is evaluated at $\Phi\inv(x)$.  Therefore:
$$
\tilde\s(x)=\nabla\tilde\a(x)
=d\Phi(\nabla\a(\Phi\inv(x)))
=d\Phi\circ\s\circ \Phi\inv(x).
$$
Hence $\tilde\s=\v.\s$ where $\v=\Phi\vert_M$.
\end{proof} 

\medskip        
\begin{eg}\label{eg:cgf}            
Consider $M=H^2_2$, whose underlying manifold is the standard $2$-sphere $x^2+y^2+z^2=1$.  By Theorem \ref{thm:cgf}\,(2) the conformal gradient field with pole vector $(0,0,1)$ is $(3,-1/2)$-harmonic. This vector field has two zeros, at $\pm(0,0,1)$, and up to congruence is the unique harmonic conformal gradient field on $M$.  In contrast, by Theorem \ref{thm:cgf}\,(1) the Riemannian $2$-sphere $S^2_0$ has no harmonic conformal gradient fields.  Furthermore $H^2_2$ and $S^2_0$ are anti-isometric, the canonical anti-isometry \eqref{eq:antisom} being the identity map, and the push-forward of $\s$ to $S^2_0$ is also a conformal gradient field (\cref{rems:cgf}), illustrating that harmonic vector fields are not invariant under anti-isometry.
\end{eg}

\bigskip
\section{Preharmonic Killing fields on pseudo-Riemannian hyperquadrics}\label{sec:HKF}
        
Now let $\s$ be a Killing field on a pseudo-Riemannian hyperquadric $M$ of sectional curvature $\e=\pm 1$. Then $\s$ is the restriction to $M$ of a unique skew-symmetric linear transformation $A\colon \mathbb{V}\to\mathbb{V}$, which we refer to as the {\sl linear extension\/} of $\s$.  Thus if $A$ has matrix $(a_{ij})$ with respect to an orthonormal frame of $\mathbb{V}$ then:
\begin{equation}
a_{ij}=-\e_i\/\e_j\/a_{ji},
\label{eq:matrix}
\end{equation}     
where the $\e_i$ are the indicator symbols of the frame.  It follows from the pseudo-Riemannian Gauss formula \cite{ON} that for all $X\in T_xM$ and all $x\in M$: 
\begin{equation}
\nab X \s=A(X)-\e\<A(X),x\>x, 
\label{eq:kfcov}
\end{equation}
where $x$ is regarded as a unit normal field on $M$.  Note that since $A$ is skew-symmetric so is $A^3$, which is therefore the linear extension of a Killing field $\hat\s$ on $M$.

\medskip
\begin{lem}\label{lem:kf1}
If $\s$ is a Killing field  on a pseudo-Riemannian hyperquadric $M$ of curvature $\e$ then:
$$
\nab{\nabla{F}}\s=-\hat\s-2\e\/ F\s.
$$
\end{lem}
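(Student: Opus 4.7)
The plan is to compute $\nabla F$ directly from the Killing-field formula \eqref{eq:kfcov}, then substitute the result back into that same formula with $\nabla F$ as the direction of differentiation, and let the skew-symmetry of $A$ (and of its odd powers) kill the extraneous normal terms.

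First I would use $2F = \<\s,\s\>$ to compute $\nabla F$. For any $Y \in T_xM$, metric compatibility gives $Y(F) = \<\nab Y \s, \s\>$; substituting \eqref{eq:kfcov} and noting that $\<\s, x\> = 0$ (since $\s$ is tangent to $M$ and $x$ is the pseudo-unit normal, with $\<x,x\> = \e$) reduces this to $Y(F) = \<A(Y), \s\> = -\<Y, A(\s)\>$ by skew-symmetry of $A$. Projecting $-A(\s)$ onto $T_xM$ via $v \mapsto v - \e\<v,x\>x$, and using $A(x) = \s(x)$ together with skew-symmetry to compute $\<A(\s), x\> = -\<\s, A(x)\> = -\<\s,\s\> = -2F$, yields
$$
\nabla F = -A(\s) - 2\e F\, x.
$$

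Next I would apply \eqref{eq:kfcov} with $X = \nabla F$. Expanding, using $A(x) = \s$ and $\<A(x), x\> = 0$ (again by skew-symmetry), one obtains
$$
\nab{\nabla F}\s = -A^2(\s) - 2\e F\s + \e\<A^2(\s), x\>\,x.
$$
The finishing observation is that $A^2(\s(x)) = A^2(A(x)) = A^3(x)$, which is by definition $\hat\s(x)$; and since $A^3$ is itself skew-symmetric as an odd power of a skew map, $\<A^3 x, x\> = 0$, so the normal component vanishes, leaving $\nab{\nabla F}\s = -\hat\s - 2\e F \s$ as required.

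The main obstacle is purely bookkeeping: one must keep careful track of which vectors live in the ambient space $\mathbb{V}$ and which in $T_xM$, and invoke the skew-symmetry of $A$ (respectively $A^3$) at precisely the right moments to eliminate the normal components. The one non-routine recognition is that the same skew-symmetry forces $A^3$ to be skew, which is what allows $A^2(\s)$ to be identified directly with the intrinsic Killing field $\hat\s$, rather than merely with a tangential projection thereof.
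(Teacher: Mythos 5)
Your proof is correct and takes essentially the same route as the paper's: both first establish $\nabla F=-A^{2}(x)-2\e F x$ (you via the duality $dF(Y)=-\<Y,A(\s)\>$ and tangential projection, the paper via an orthonormal-frame expansion --- the same computation in different clothing), and then substitute this into \eqref{eq:kfcov}, using skew-symmetry of $A^{3}$ to annihilate the normal component. No gaps.
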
   

\begin{proof}
We note first that an orthonormal tangent frame $\{E_i\}$ to $M$ at $x\in M$, with indicator symbols $\e_i$, extends to an orthonormal basis $\{E_1,\dots,E_n,x\}$ of $\mathbb{V}$, with indicator symbols $\e_1,\dots,\e_n,\e$.  Then, since $2F=\<\s,\s\>$ we have:
\begin{align*}
\nabla{F}(x)    
&=\ts\sum_i \e_i\/dF(E_i)E_i
=\ts\sum_i \e_i \<\nab{E_i}\s,\s\>E_i \\
&=\ts\sum_i \e_i \< A(E_i),A(x)\>E_i, 
\quad\text{by \eqref{eq:kfcov}} \\
&=-\ts\sum_i \e_i \< E_i,A^2(x)\>E_i 
=-A^2(x)+\e\/\<A^2(x),x\>x \\
&=-A^2(x)-\e\<\s(x),\s(x)\>x
=-A^2(x)-2\e\/F(x)x.
\end{align*}
Therefore by \eqref{eq:kfcov} again:
\begin{align*}
\nab{\nabla{F}(x)}\s   
&=A(\nabla{F}(x))-\e\<A(\nabla{F}(x)),x\> x \\
&=-A^3(x)-2\e\/F(x)\/A(x) \\
&=-\hat\s(x)-2\e\/F(x)\s(x),
\end{align*}
since $\< A^3(x),x\>=0=\<A(x),x\>$.
\end{proof}

\medskip
In order to determine which Killing fields are preharmonic we will use the following technical fact.

\medskip
\begin{lem}\label{lem:kfpwise}
Suppose $\s,\rho$ are non-trivial Killing fields on a pseudo-Riemannian hyperquadric $M$.  If $\rho=\lambda\/\s$ for some smooth function $\lambda\colon M\to\R$ then $\lambda$ is constant.
\end{lem}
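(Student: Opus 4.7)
The plan is to combine the Killing property of $\s$ and $\rho$ with the Leibniz rule applied to $\rho=\lambda\s$, in order to force $d\lambda$ to vanish wherever $\s$ does not; standard rigidity of Killing fields then spreads this conclusion to all of $M$.

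First I would differentiate $\rho=\lambda\s$ in the direction of an arbitrary tangent vector $X$ via the Leibniz rule, obtaining $\nab X\rho=X(\lambda)\/\s+\lambda\/\nab X\s$.  Since both $\s$ and $\rho$ are Killing, the bilinear forms $\<\nab{\cdot}\s,\cdot\>$ and $\<\nab{\cdot}\rho,\cdot\>$ are skew-symmetric; symmetrising the identity above in $X,Y$ therefore cancels the $\lambda\/\nab X\s$ term and leaves the pointwise identity
$$
X(\lambda)\<\s,Y\>+Y(\lambda)\<\s,X\>=0\qquad\text{for all }X,Y\in T_xM,
$$
i.e.\ the symmetric product $d\lambda\cdot\s^\flat$ vanishes.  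Setting $X=Y$ reduces this to $X(\lambda)\<\s,X\>=0$, so $T_xM=\ker d\lambda\cup\ker\s^\flat$ at each $x\in M$.  As $n\geqs2$ by \cref{defn:pseudo}, a real vector space is never the union of two proper subspaces, and consequently at each point either $d\lambda=0$ or $\s=0$.

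It remains to exclude $\s$ vanishing on an open set.  The linear extension $A$ of $\s$ to $\mathbb{V}$ is a fixed skew-symmetric map satisfying $\s(x)=A(x)$ on $M$, and any nonempty open $U\subset M$ spans $\mathbb{V}$ (otherwise $U$ would lie in a proper subspace $W$, hence in the lower-dimensional submanifold $M\cap W$).  Thus $\s|_U\equiv0$ would imply $A=0$, contradicting the assumed nontriviality of $\s$.  Hence $\{x:\s(x)\neq0\}$ is open and dense, $d\lambda$ vanishes on it, and by continuity $d\lambda\equiv 0$ on the connected manifold $M$; thus $\lambda$ is constant.

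The main subtlety is the linear-algebraic dichotomy drawn from $\ker d\lambda\cup\ker\s^\flat=T_xM$, which is the one place where the dimensional hypothesis $n\geqs2$ is essential.  Everything else is routine: the Killing equation, the Leibniz rule, and the standard rigidity of Killing fields on a hyperquadric (as recorded in the paragraph preceding \eqref{eq:kfcov}).
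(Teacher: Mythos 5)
Your proof is correct, but it takes a genuinely different route from the paper's. The paper works extrinsically with the linear extensions $A,B$ of $\s,\rho$: differentiating $B(x)=\lambda(x)A(x)$ along $M$ shows that the skew-symmetric map $B-\lambda(x)A$ annihilates the normal direction $x$ and sends $T_xM$ into the line spanned by $\s(x)$, hence has rank at most one; since a non-trivial skew-symmetric transformation of pseudo-Euclidean space has rank at least two by \eqref{eq:matrix}, it follows that $B=\lambda(x)A$ identically, and as $A\neq0$ this pins down $\lambda(x)$ independently of $x$. You instead argue intrinsically: the Killing equations for $\s$ and $\rho$ combined with the Leibniz rule force $X(\lambda)\<\s,X\>=0$, the union-of-two-proper-subspaces dichotomy gives $d\lambda=0$ off the zero set of $\s$, and density of that set plus continuity of $d\lambda$ finishes the job. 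Your version is more elementary and strictly more general---it proves the statement for Killing fields on any connected pseudo-Riemannian manifold, the hyperquadric entering only through the (replaceable) argument that a non-trivial Killing field cannot vanish on an open set---whereas the paper's is shorter given the linear-extension machinery already set up in \cref{sec:HKF}, yields the stronger conclusion $B=\lambda A$ as linear maps, and gives constancy of $\lambda$ without a separate density-and-continuity step or any appeal to connectedness. Two small imprecisions, neither fatal: the union-of-subspaces fact needs only $\dim T_xM\geqs1$, not $n\geqs2$; and $M\cap W$ need not be a submanifold, though it is a proper algebraic subset of $W$ of dimension at most $n-1$, which is all your argument requires.
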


\begin{proof}
Suppose $\s,\rho$ have skew-symmetric linear extensions $A,B$ respectively.  
Then for all $x\in M$: 
\begin{equation}
B(x)-\lambda(x)A(x)=0.
\label{eq:pwise}
\end{equation}
Differentiating this equation and rearranging yields:
\begin{equation}
B(X)-\lambda(x)A(X)=d\lambda(X)A(x), 
\label{eq:pwisediff}
\end{equation}
for all $X\in T_x M$.  Since $x$ is normal to $M$, it follows from \eqref{eq:pwise} and \eqref{eq:pwisediff} that for each $x\in M$ the skew-symmetric linear map $B-\lambda(x)A\colon\mathbb{V}\to\mathbb{V}$ has rank at most one.  However (non-trivial) skew-symmetric transformations of pseudo-Euclidean space have rank at least two by \eqref{eq:matrix}.  Therefore $B-\lambda(x)A=0$, and consequently $\lambda(x)=\lambda(y)$ for all $x,y\in M$.  
\end{proof}

\medskip
\begin{prop}\label{prop:kfpreharm}
A Killing field $\s$ on a pseudo-Riemannian hyperquadric of curvature $\epsilon$ is preharmonic if and only if $\hat\s=\lambda\/\s$ for some $\lambda\in\R$, in which case the spinnaker is:
$$
\z=-(\lambda+2\e F).
$$
\end{prop}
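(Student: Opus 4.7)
\medskip
My plan is to show that for a Killing field $\s$ on a hyperquadric the rough-Laplacian condition for preharmonicity is automatic, so that the whole problem reduces to analysing when $\nab{\nabla F}\s$ is pointwise collinear with $\s$; then invoke Lemma \ref{lem:kf1} to reformulate this condition in terms of $\hat\s$, and invoke Lemma \ref{lem:kfpwise} to force the scalar of proportionality to be constant.

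\medskip
More precisely, by \cref{defn:preharm} preharmonicity means that both $\nabla^*\nabla\s=\nu\s$ and $\nab{\nabla F}\s=\z\s$ for smooth functions $\nu,\z\colon M\to\R$.  The first step is to establish that on a pseudo-Riemannian hyperquadric of curvature $\e$ every Killing field satisfies $\nabla^*\nabla\s=\e(n-1)\s$.  This is the pseudo-Riemannian analogue of the standard Bochner identity $\nabla^*\nabla\s=\Ric(\s)$ for Killing fields, and can be proved directly by differentiating the Gauss-type formula \eqref{eq:kfcov}: choose an orthonormal frame $\{E_i\}$ at $x$ with $\nab{E_i}{E_j}=0$, compute $\nabsq{E_i}{E_i}\s$ in the ambient pseudo-Euclidean space, and extract the tangential component using skew-symmetry of $A$ together with $\<A(x),x\>=0$.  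This leaves $\nu=\e(n-1)$, so the first preharmonicity condition is automatic and may be ignored henceforth.

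\medskip
Thus preharmonicity reduces to the existence of a smooth $\z$ with $\nab{\nabla F}\s=\z\s$.  By \cref{lem:kf1} this condition is equivalent to $\hat\s=-(\z+2\e F)\s$, so preharmonicity holds if and only if $\hat\s$ is pointwise a smooth-function multiple of $\s$.  The Killing field $\hat\s$ has linear extension $A^3$, which is nontrivial precisely when $\hat\s\not\equiv0$; in that case \cref{lem:kfpwise} forces the proportionality function to be a constant $\lambda\in\R$.  (If instead $\hat\s\equiv0$ the relation $\hat\s=\lambda\s$ trivially holds with $\lambda=0$.)  Solving $\hat\s=-(\z+2\e F)\s=\lambda\s$ for $\z$ yields the stated formula $\z=-(\lambda+2\e F)$.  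The converse is immediate: if $\hat\s=\lambda\s$ then \cref{lem:kf1} gives $\nab{\nabla F}\s=-(\lambda+2\e F)\s$, exhibiting condition (ii) with spinnaker $\z=-(\lambda+2\e F)$.

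\medskip
The main obstacle is the first step, verifying $\nabla^*\nabla\s=\e(n-1)\s$, since this requires a careful ambient-space computation of $\nabsq{E_i}{E_i}\s$ and reliance on pseudo-Riemannian skew-symmetry; the remainder is essentially a direct transcription of the Riemannian argument together with \cref{lem:kfpwise}.
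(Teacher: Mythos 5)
Your argument is correct and follows essentially the same route as the paper: establish $\nabla^*\nabla\s=\Ric(\s)=\e(n-1)\s$ (the paper simply cites Yano--Bochner where you sketch a direct ambient computation of the same identity), reduce preharmonicity to pointwise collinearity of $\hat\s$ with $\s$ via Lemma \ref{lem:kf1}, and invoke Lemma \ref{lem:kfpwise} to make the factor constant. Your explicit handling of the degenerate case $\hat\s\equiv0$ (where Lemma \ref{lem:kfpwise} does not literally apply) is a small point of extra care that the paper's proof leaves implicit.
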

    
\begin{proof}
Since $\s$ is a Killing field we have \cite{YB}:
\begin{equation}
\nabla^*\nabla\s=\Ric(\s)=\e(n-1)\s;
\label{eq:kfrough}
\end{equation}
thus $\s$ is an eigenfunction of the rough Laplacian.
It therefore follows from Lemma \ref{lem:kf1} that $\s$ is preharmonic (\cref{defn:preharm} et seq.)~if and only if $\hat \s$ is a pointwise scalar multiple of $\s$, and thus from Lemma \ref{lem:kfpwise} that $\hat\s=\lambda\/\s$ for some $\lambda\in\R$.  The spinnaker may be read off from \cref{lem:kf1}.
\end{proof}

\medskip
We also require the Laplacian of the pseudo-length of a Killing field.

\medskip
\begin{lem}\label{lem:kf2}
The pseudo-length of a Killing field on a hyperquadric of curvature $\e$ satisfies: 
$$
\Delta{F}=2\e(n+1)F-\<A,A\>,
$$
where $\<A,A\>$ is the pseudo-length of the linear extension.
\end{lem}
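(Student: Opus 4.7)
The plan is to reduce the computation of $\Delta F$ to that of $\<\nabla\s, \nabla\s\>$ via the Weitzenb\"ock identity, and then evaluate the latter by exploiting the ambient skew-symmetric extension $A$.

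First, combining the Weitzenb\"ock identity \eqref{eq:weitz} with the identity $\nabla^*\nabla\s = \e(n-1)\s$ from \eqref{eq:kfrough} immediately yields
$$
\Delta F = \<\nabla^*\nabla \s, \s\> - \<\nabla\s, \nabla\s\> = 2\e(n-1)F - \<\nabla\s, \nabla\s\>.
$$
Thus it suffices to express $\<\nabla\s, \nabla\s\>$ in terms of $\<A, A\>$ and $F$.

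To carry this out pointwise at $x \in M$, I would choose a tangent orthonormal frame $\{E_i\}$ with indicator symbols $\e_i$ and extend it to the orthonormal basis $\{E_1, \ldots, E_n, x\}$ of $\mathbb{V}$, exactly as in the proof of \cref{lem:kf1}. By \eqref{eq:kfcov}, the field $\nab{E_i}\s$ is the tangential projection of $A(E_i)$ along the unit normal $x$, and a direct expansion using $\<x,x\> = \e$ gives
$$
\<\nab{E_i}\s, \nab{E_i}\s\> = \<A(E_i), A(E_i)\> - \e\<A(E_i), x\>^2.
$$
Summing with indicator weights, the skew-symmetry of $A$ together with $A(x) = \s(x)$ recognises the $x$-component sum as a pseudo-length expansion of a tangent vector:
$$
\sum_i \e_i \<A(E_i), x\>^2 = \sum_i \e_i \<E_i, \s(x)\>^2 = \<\s, \s\> = 2F.
$$
On the other hand, computing $\<A, A\>$ in the full ambient basis $\{E_1, \ldots, E_n, x\}$ separates off the extra term $\e\<A(x), A(x)\> = 2\e F$, so
$$
\sum_i \e_i \<A(E_i), A(E_i)\> = \<A, A\> - 2\e F.
$$
Combining the two displays yields $\<\nabla\s, \nabla\s\> = \<A, A\> - 4\e F$.

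Substituting back into the Weitzenb\"ock-derived expression produces
$$
\Delta F = 2\e(n-1)F + 4\e F - \<A,A\> = 2\e(n+1)F - \<A,A\>,
$$
as required. The only real obstacle is careful bookkeeping of indicator symbols and the two independent contributions of $2\e F$ (one from the tangential projection in \eqref{eq:kfcov}, the other from isolating the normal direction $x$ in $\<A, A\>$), which together shift the coefficient of $F$ from $n-1$ to $n+1$.
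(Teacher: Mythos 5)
Your proposal is correct and follows essentially the same route as the paper: Weitzenb\"ock plus $\nabla^*\nabla\s=\e(n-1)\s$ to reduce to $\<\nabla\s,\nabla\s\>$, then the expansion of $\nab{E_i}\s$ via \eqref{eq:kfcov} in the extended orthonormal basis $\{E_1,\dots,E_n,x\}$, with the two separate $2\e F$ contributions accounting for the shift from $n-1$ to $n+1$. Your bookkeeping $\<\nabla\s,\nabla\s\>=\<A,A\>-4\e F$ is the correct value (the paper's final displayed line contains a slip, writing $\<A,A\>-\e F$ for what its own preceding expression $\<A,A\>-2\e\<\s,\s\>$ actually equals).
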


\begin{note*}
The pseudo-length of $A$ is measured with respect to the metric on $\mathbb{V}^*\otimes\mathbb{V}$ inherited from the metric \eqref{eq:pseudoeuc} on $\mathbb{V}$:
$$
\<A,A\>=\ts\sum_i \e_i\<A(e_i),A(e_i)\>,
$$
where $\{e_i\}$ is any orthonormal basis of $\mathbb{V}$, with indicator symbols $\e_i$.
\end{note*}

\begin{proof}
Firstly, from the Weitzenb\"ock formula \eqref{eq:weitz} and \eqref{eq:kfrough}:
\begin{align*}
\Delta{F}
&=\<\nabla^*\nabla\s,\s\>-\<\nabla\s,\nabla\s\>
=2\e(n-1)F-\<\nabla\s,\nabla\s\>.
\end{align*}
Now, recalling the note at the beginning of the proof of Lemma \ref{lem:kf1}, by \eqref{eq:kfcov}:
\begin{align*}
\<\nabla\s,\nabla\s\> 
&=\ts\sum_i \e_i\<\nab{E_i}\s,\nab{E_i}\s\> \\
&=\ts\sum_i \e_i \bigl(\<A(E_i),A(E_i)\>-\e\<A(E_i),x\>^2\bigr) \\
&=\<A,A\>-\e\< A(x),A(x)\>-\e \ts\sum_i\e_i\<\s(x),E_i\>^2 \\
&=\<A,A\>-2\e\<\s,\s\>
=\<A,A\>-\e\/F.
\qedhere
\end{align*}     
\end{proof}

\medskip       
Combining \cref{prop:kfpreharm} and \cref{lem:kf2} with \cref{prop:preharm} yields the following criterion for a preharmonic Killing field to be harmonic.

\medskip
\begin{thm}\label{thm:kf}
Let $\s$ be a preharmonic Killing field on a pseudo-Riem\-annian hyperquadric of curvature $\e$.  Then $\s$ is $(p,q)$-harmonic if and only if:
\begin{align*}
0 &=\e(n+1-p)q\/(2F)^2 \\
&\qquad
+\bigl(\e(n-1+(n+1)q)-pq\lambda-q\<A,A\>\bigr)(2F) \\
&\qquad\qquad
+\e(n-1)-2p\lambda-(p+q)\<A,A\>,
\end{align*}
where $A$ is the linear extension of $\s$ and $\lambda\in\R$ is  characterised by $2F\/\lambda=\<\s,\hat \s\>$.
\end{thm}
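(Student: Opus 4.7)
My approach is to invoke the preharmonic criterion of Theorem \ref{prop:preharm} and reduce the theorem to a polynomial identity in $2F$ by substituting the three quantities $\nu$, $\z$, and $\Delta F$ associated to a preharmonic Killing field. Once these three substitutions are made, the result follows from routine algebra.

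The three ingredients are read off from earlier results in the section. By \eqref{eq:kfrough}, $\s$ is an eigensection of the rough Laplacian with eigenvalue $\nu=\e(n-1)$. By \cref{prop:kfpreharm}, preharmonicity means $\hat\s=\lambda\s$ for some constant $\lambda\in\R$; pairing this relation with $\s$ gives the asserted identity $2F\lambda=\<\s,\hat\s\>$, and the spinnaker is $\z=-(\lambda+2\e F)$. By \cref{lem:kf2}, $\Delta{F}=2\e(n+1)F-\<A,A\>$.

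Substituting these three expressions into the preharmonic equation
$$
(p+q+2qF)\/\Delta{F}+2p(1+qF)\/\z+(1+2(1-p)F)\/\nu=0
$$
and writing $u=2F$ for brevity, the left-hand side expands to
$$
(p+q+qu)\bigl(\e(n+1)u-\<A,A\>\bigr)+(2p+pqu)\bigl(-\lambda-\e u\bigr)+\bigl(1+(1-p)u\bigr)\e(n-1),
$$
which is a quadratic in $u$. The coefficient of $u^2$ is $\e q(n+1)-pq\e=\e q(n+1-p)$, and the constant term reads $\e(n-1)-2p\lambda-(p+q)\<A,A\>$; both match the statement. The coefficient of $u$ simplifies upon noting that $(n+1)p-2p-(n-1)p=0$, so the $\e$-contributions proportional to $p$ cancel, leaving $\e(n-1+(n+1)q)-pq\lambda-q\<A,A\>$, again matching the statement.

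No genuine obstacle arises: the proof is essentially a direct substitution into \cref{prop:preharm}, using three earlier results. The only step requiring care is the linear-in-$u$ coefficient, where four $p$-bearing terms (one from each of the three summands, with the $\e(n-1)(1-p)u$ contribution being the easiest to miscount) must be combined, with the $\e$-contributions cancelling while $-pq\lambda$ survives.
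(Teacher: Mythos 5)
Your proposal is correct and is exactly the paper's intended argument: the paper offers no written proof beyond the remark that the theorem follows by combining \cref{prop:kfpreharm} and \cref{lem:kf2} with \cref{prop:preharm}, which is precisely the substitution you carry out, and your expansion of the quadratic in $2F$ (including the cancellation of the $\e p$ terms in the linear coefficient) checks out.
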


\medskip        
We will see that in the $2$-dimensional case all Killing fields are preharmonic.

\bigskip
\section{Harmonic Killing fields on pseudo-Riemannian quadrics}\label{sec:2d}

In this section we work in pseudo-Euclidean $3$-space, where for convenience the coordinates are denoted $(x,y,z)$ rather than $(x_1,x_2,x_3)$.  
We recall that there are six pseudo-Riemannian quadrics, oganised into three anti-isometric pairs:

\begin{itemize}[leftmargin=1em]

\item 
The Riemannian $2$-sphere $S^2_0\subset\R^3_0$ and its negative definite counterpart $H^2_2\subset\R^3_3$, whose underlying manifold is the standard $2$-sphere $x^2+y^2+z^2=1$.

\item The hyperbolic plane $H^2_0\subset\R^3_1$ and its negative definite counterpart $S^2_2\subset\R^3_2$, whose underlying manifolds are the hyperboloids of two sheets with equations $x^2+y^2-z^2=-1$ and $x^2-y^2-z^2=1$, respectively.  (Strictly speaking, the hyperbolic plane is a connected component of $H^2_0$.)

\item The neutral quadrics, $S^2_1\subset\R^3_1$ and $H^2_1\subset\R^3_2$, whose underlying manifolds are the hyperboloids of one sheet with equations $x^2+y^2-z^2=1$ and $x^2-y^2-z^2=-1$, respectively.
\end{itemize}
Note that the quadrics of index $0$ and $2$ are in fact space forms, whereas strictly speaking the neutral quadrics are not.

\medskip
\begin{lem}\label{lem:2dkfpreharm}
Let $\s$ be a Killing field on a pseudo-Riemannian quadric of curvature $\e$, whose linear extension has the following matrix with respect to an orthonormal frame of $\mathbb{V}$:
$$
\left(\begin{array}{ccc}
0 & a & b \\
-\e_1\/\e_2\/a & 0 & c \\
-\e_1\/\e_3\/b & -\e_2\/\e_3\/c & 0 
\end{array} \right)
$$
where $a,b,c\in\R$ and $\e_1,\e_2,\e_3$ are the indicator symbols of the frame.  Then $\s$ is preharmonic, and:
$$
\lambda=-\e_1\/\e_2\/a^2-\e_1\/\e_3\/b^2-\e_2\/\e_3\/c^2.
$$
\end{lem}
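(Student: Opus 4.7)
The plan is to read off preharmonicity of $\s$ directly from its linear extension $A$, by means of Proposition \ref{prop:kfpreharm}: that result asserts that $\s$ is preharmonic precisely when $\hat\s=\lambda\s$ for some $\lambda\in\R$, where $\hat\s$ is the Killing field with linear extension $A^3$. It therefore suffices to establish the purely linear-algebraic identity $A^3=\lambda A$ as endomorphisms of the three-dimensional ambient space $\mathbb{V}$, with $\lambda$ the scalar claimed in the statement.

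Having reduced the problem to linear algebra in dimension three, I would invoke the Cayley-Hamilton theorem. The characteristic polynomial of $A$ reads
$$\chi_A(t)=t^3-(\trace A)\,t^2+c_1(A)\,t-\det A,$$
where $c_1(A)$ denotes the sum of the three principal $2\times 2$ minors of the given matrix. The trace vanishes by inspection of the diagonal; the key remaining point is that the determinant also vanishes. This I would verify by a direct $3\times 3$ expansion, the two nonzero contributions cancelling because $\e_2^2=1$. Cayley-Hamilton then collapses to $A^3=-c_1(A)\,A$, so that $\hat\s=\lambda\s$ with $\lambda=-c_1(A)$, and $\s$ is therefore preharmonic.

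Identifying $\lambda$ is then a short book-keeping exercise. Computing the three principal $2\times 2$ minors, obtained by deleting each row-column pair in turn, yields the contributions $\e_1\e_2\,a^2$, $\e_1\e_3\,b^2$ and $\e_2\e_3\,c^2$ respectively, so
$$c_1(A)=\e_1\e_2\,a^2+\e_1\e_3\,b^2+\e_2\e_3\,c^2,$$
whence $\lambda=-\e_1\e_2\,a^2-\e_1\e_3\,b^2-\e_2\e_3\,c^2$ as claimed.

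There is no serious obstacle: the only calculations are routine $3\times 3$ manipulations, with a little care required in tracking the indicator symbols. The substantive content is the reduction, via Cayley-Hamilton, of the pointwise identity $\hat\s=\lambda\s$ to the two scalar conditions $\trace A=0$ and $\det A=0$; this reduction is available precisely because the ambient space is three-dimensional, in agreement with the remark preceding the lemma that in two dimensions all Killing fields on a pseudo-Riemannian quadric are preharmonic, whereas in higher dimensions one expects a genuine obstruction.
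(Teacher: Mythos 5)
Your proposal is correct, and it funnels through the same reduction as the paper: by \cref{prop:kfpreharm} everything comes down to the linear-algebraic identity $A^3=\lambda A$. Where you differ is in how that identity is established. The paper simply cubes the matrix directly and compares the result with $A$; you instead invoke Cayley--Hamilton, reducing the identity to the two scalar facts $\trace A=0$ and $\det A=0$ and reading off $\lambda=-c_1(A)$ from the sum of principal $2\times2$ minors. Your computations check out: the diagonal vanishes by skew-adjointness, the two surviving terms $\mp\,\e_1\e_2^2\e_3\,abc$ in the determinant expansion cancel, and the three minors are $\e_2\e_3 c^2$, $\e_1\e_3 b^2$, $\e_1\e_2 a^2$, giving exactly the stated $\lambda$. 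What your route buys is a structural explanation the paper's brute-force cubing obscures: $\det A=0$ holds for \emph{any} skew-adjoint endomorphism of an odd-dimensional pseudo-Euclidean space (from $A^{T}=-EAE$ with $E=\mathrm{diag}(\e_1,\e_2,\e_3)$ one gets $\det A=-\det A$), so in ambient dimension three the minimal polynomial of a Killing field's linear extension is automatically of the form $t^3+c_1(A)t$, which is precisely why preharmonicity is unconditional here and cannot be expected in higher dimensions. The cost is negligible; if anything your argument is shorter and less error-prone than multiplying out $A^3$ with the indicator symbols in place.
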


\begin{proof}
By \cref{prop:kfpreharm} it suffices to calculate $A^3$ and compare it with $A$.
\end{proof}

\medskip
\begin{thm}\label{thm:2dkf}
Let $\sigma$ be a Killing field on a pseudo-Riemannian quadric of sectional curvature $\e$. Then $\s$ is $(p,q)$-harmonic if and only if: 
$$
p=3,\quad q=-1/2,\quad \lambda=\e.
$$
\end{thm}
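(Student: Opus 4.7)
The plan is to apply the criterion of \cref{thm:kf} — which by \cref{lem:2dkfpreharm} applies to every Killing field on a two-dimensional quadric — and read off $(p,q)$ from the resulting polynomial identity. As a preliminary I would eliminate one unknown invariant of $\s$ by computing $\<A,A\>$ explicitly from the matrix given in \cref{lem:2dkfpreharm}: evaluating $\<A(e_i), A(e_i)\>$ for $i=1,2,3$ and assembling with indicator symbols yields
$$
\<A,A\> = 2\e_1\e_2 a^2 + 2\e_1\e_3 b^2 + 2\e_2\e_3 c^2 = -2\lambda,
$$
so the harmonic criterion depends on $\s$ only through $\lambda$, $\e$ and $F$.

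Substituting $\<A,A\> = -2\lambda$ and $n = 2$ into \cref{thm:kf} reduces harmonicity to
$$
0 = \e(3-p)q\,(2F)^2 + \bigl(\e(1+3q) + q\lambda(2-p)\bigr)(2F) + \e + 2q\lambda,
$$
holding pointwise on $M$. Since $2F = \<\s,\s\>$ extends to a quadratic function on the ambient space $\mathbb{V}$ and $M$ (or a connected component, where relevant) is connected, $2F$ is either constant or takes infinitely many values on $M$. In the latter case the three coefficients above must vanish separately: the constant coefficient forces $q\lambda = -\e/2$ (so $q \neq 0$), the quadratic coefficient then forces $p = 3$, and substituting $p = 3$ into the linear coefficient gives $\e(1+3q) - q\lambda = 0$, which combined with $q\lambda = -\e/2$ delivers $q = -1/2$ and $\lambda = \e$. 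A direct substitution confirms these values solve all three equations, giving the converse.

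The main obstacle is the constant-$F$ case. If $F$ is constant on $M$ then the quadratic polynomial $-\tfrac12\<A^2 x, x\> - F|_M$ on $\mathbb{V}$ vanishes on the quadric $\{\<x,x\> = \e\}$; since the vanishing ideal of $M$ is principal, generated by the degree-$2$ polynomial $\<x,x\> - \e$, the two polynomials are proportional, and comparing homogeneous parts forces $A^2 = c\,I$ for some $c \in \R$. Taking trace then gives $3c = 2\lambda$, while the relation $A^3 = cA$ combined with preharmonicity $\hat\s = \lambda\s$ (\cref{prop:kfpreharm}) forces $c = \lambda$ for non-trivial $\s$; together these yield $\lambda = 0$, $A^2 = 0$ and $F \equiv 0$. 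Substituting $2F = 0$ and $\lambda = 0$ into the polynomial above leaves $\e \neq 0$, so no such null Killing field is harmonic. The same trace argument shows that $F \equiv -\tfrac12$ cannot be realised on $M$ by a non-trivial Killing field (it would demand $A^2 = \e I$ with $3\e = 2\lambda$, conflicting with the preharmonicity-derived $\lambda = c = \e$), so the degenerate convention of \cref{rems:pqharm}(3) never enters.
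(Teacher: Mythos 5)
Your argument is correct and follows the paper's proof essentially verbatim: compute $\<A,A\>=-2\lambda$ from \cref{lem:2dkfpreharm}, substitute into \cref{thm:kf} with $n=2$, and force the resulting quadratic in $2F$ to vanish coefficient by coefficient, which pins down $p=3$, $q=-1/2$, $\lambda=\e$. The one genuine addition is your justification that the coefficients must vanish separately, i.e.\ ruling out constant $F$ (and the degenerate value $2F\equiv-1$) --- a point the paper passes over in silence; your ideal-theoretic argument is sound, and in fact the case is vacuous, since $\nabla F(x)=-A^2(x)-2\e F(x)x\equiv 0$ forces $A^2=c\/I$ on the span of $M$, whence $c=\lambda=0$ as you show, so $\im A$ would be totally isotropic and $A$ a skew-symmetric map of rank at most one, hence zero.
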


\begin{proof}
Consider first:
\begin{equation*}
\<A,A\>=\textstyle\sum_i\e_i\/\< A(e_i),A(e_i)\>
=\textstyle\sum_{i,j}\e_i\/\e_j\,a_{ij}^{\;2} 
=-2\lambda,
\end{equation*}
by \cref{lem:2dkfpreharm}.  Therefore, since $\s$ is preharmonic, by \cref{thm:kf} $\s$ is $(p,q)$-harmonic if and only if:
\begin{align*}
0&=\e(3-p)q(2F)^2
+\bigl(\e(1+3q)+(2-p)q\lambda\bigr)(2F)
+2q\lambda+\e.
\end{align*}
The leading coefficient of this polynomial in $F$ vanishes if and only if $p=3$ or $q=0$; however if $q=0$ the linear term cannot vanish.  When $p=3$ the remaining equations reduce to:
$$
\e(1+3q)-q\lambda=0=\e+2q\lambda,
$$
which yield the stated values of $q$ and $\lambda$.
\end{proof}                     

\medskip
We note that for the Riemannian $2$-sphere $\lambda<0$ and $\e=1$, so \cref{thm:2dkf} precludes the existence of harmonic Killing fields, as already observed in \cite{BLW2}.  Comparison of \cref{lem:2dkfpreharm} and \cref{thm:2dkf} shows that harmonic Killing fields on each of the remaining pseudo-Riemannian quadrics form a quadric in the $3$-dimensional Lie algebra of Killing fields (although not necessarily of the same type as the underlying quadric or its anti-isometric counterpart).  However we will show that this quadric is actually a single congruence class.
In fact we will show that the congruence class of a Killing field on a pseudo-Riemannian quadric is determined by $\lambda$. This was already observed for $S^2_0$ and $H^2_0$ in \cite{BLW2}, from which it may be deduced also for $H^2_2$ and $S^2_2$, since the space of of Killing fields and its congruence structure is preserved by the canonical anti-isometry, leaving only the neutral quadrics.  It suffices to consider $H^2_1$, and we first establish the qualitative behaviour of Killing fields in this case.

\medskip
\begin{prop}\label{prop:2dkffix}
The fixed points of a non-trivial Killing field on $H^2_1$ are categorised by $\lambda=a^2+b^2-c^2$.  The Killing field has:

\begin{enumerate}[leftmargin=1.7em]
\item[\rm{(1)}] 
no fixed points if $\lambda<0;$
\item[\rm{(2)}] 
two ideal fixed points, one on each component of the boundary at infinity, if $\lambda = 0;$
\item[\rm{(3)}] 
two fixed points if $\lambda>0$.
\end{enumerate}
\end{prop}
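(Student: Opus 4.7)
The plan is to reduce the question to a linear-algebra calculation in the ambient space $\R^3_2$. First I would observe that the linear extension $A$ of $\s$ is skew with respect to the ambient pseudo-inner product, so $\<A(x), x\> = 0$ for every $x$; consequently, by the Gauss formula \eqref{eq:kfcov}, $A(x)$ is automatically tangent to $M = H^2_1$ and $\s(x) = A(x)$ on $M$. The fixed-point set of $\s$ on $M$ is therefore exactly $\ker A \cap M$.

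Next I would specialise \cref{lem:2dkfpreharm} to the indicator symbols $(\e_1, \e_2, \e_3) = (1, -1, -1)$ of $\R^3_2$, which gives
\[
A = \begin{pmatrix} 0 & a & b \\ a & 0 & c \\ b & -c & 0 \end{pmatrix},
\qquad \lambda = a^2 + b^2 - c^2.
\]
A one-line check identifies $v = (c, b, -a)$ as a kernel vector; since $A \neq 0$ forces at least one $2 \times 2$ minor to be non-zero we have $\rank A = 2$ and hence $\ker A = \R v$. A short computation then yields $\<v, v\> = c^2 - b^2 - a^2 = -\lambda$.

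The three cases fall out by intersecting $\R v$ with $H^2_1$: writing $x = tv$, the equation $\<x, x\> = -1$ becomes $t^2 \lambda = 1$. For $\lambda < 0$ there are no real solutions, giving case (1); for $\lambda > 0$ there are two solutions giving the two fixed points $\pm v/\sqrt{\lambda}$, giving case (3); for $\lambda = 0$ the vector $v$ is null, so $\R v$ lies on the asymptotic cone, misses $H^2_1$ entirely, and its two opposite rays asymptote to the two ends of the one-sheeted hyperboloid, giving case (2).

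The only genuinely delicate point, and the one I would be most careful about, is justifying that the two rays $\R_+ v$ and $\R_- v$ land on \emph{different} components of the boundary at infinity when $\lambda = 0$. I would handle this by parametrising $H^2_1$ as $(\sinh t, \cosh t \cos\theta, \cosh t \sin\theta)$, so that the two boundary components correspond to $t \to \pm\infty$, and noting that the nullity condition $c^2 = a^2 + b^2$ forces $v_1 = c \neq 0$ (otherwise $A = 0$), so that the sign of $t$ in $x = tv$ determines which end is approached.
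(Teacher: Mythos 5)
Your proposal is correct and reaches the stated conclusions by a genuinely different route from the paper. You reduce everything to the linear algebra of the kernel: since $\<A(x),x\>=0$, the zeros of $\s$ on $H^2_1$ are exactly $\ker A\cap H^2_1$, and the computation $\ker A=\R\/(c,b,-a)$ together with $\<(c,b,-a),(c,b,-a)\>=-\lambda$ disposes of cases (1) and (3) in two lines; your kernel vector agrees with the fixed points $\pm(c,b,-a)/\sqrt{a^2+b^2-c^2}$ found in the paper. The paper instead builds a Beltrami-type finite model, projecting $H^2_1$ onto a closed cylinder $C$, pushing $\s$ forward to a field $\bar\s$ that extends smoothly to $\partial C$, and solving $\bar\s=0$ on $C\cup\partial C$; what that buys is a concrete definition of ``ideal fixed point'' (a zero of the boundary extension) and a single computation covering interior and boundary zeros simultaneously. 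The one place you should tighten is case (2): identifying the two null rays of $\ker A$ with one point on each end is fine (and your observation that $v_1=c\neq0$ correctly separates the ends), but to conclude that these are ideal fixed points and that there are \emph{exactly} two of them you need a criterion for a boundary point to be fixed and an argument that no other boundary point satisfies it. The natural criterion is that the flow $e^{tA}$ preserves the corresponding null ray, i.e.\ that the ray is a real eigendirection of $A$ (automatically null when the eigenvalue is nonzero, by skew-symmetry). Since the characteristic polynomial of $A$ is $-\mu(\mu^2-\lambda)$, when $\lambda=0$ the map $A$ is nilpotent and every real eigendirection lies in $\ker A$, so your two points are the only candidates; note that this exhaustiveness genuinely uses $\lambda=0$, because for $\lambda>0$ the eigendirections for $\pm\sqrt{\lambda}$ supply four further ideal fixed points, which neither the statement of the proposition nor your count of actual fixed points needs to record. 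With that clarification added, your argument is complete and shorter than the paper's.
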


\begin{proof}
The idea is to set up a finite model for $H^2_1$, analogous to the Beltrami disc model for the hyperbolic plane.
Let $C\subset \R^3_2$ be the finite open cylinder:
$$
C=\{(x,y,z): -1<x<1,\; y^2+z^2=1\},
$$
and project $H^2_1$ onto $C$ along rays through the origin. This gives a map:
$$
\psi \colon H^2_1 \to C;\; \psi(x,y,z)=\frac{1}{\sqrt{1+x^2}}(x,y,z),
$$
with differential:
\begin{align*}
d\psi_{(x,y,z)}(u,v,w) 
&=\frac{1}{(1+x^2)^{3/2}}
\bigl(u,-xyu+(1+x^2)v,-xzu+(1+x^2)w\bigr).
\end{align*}
The inverse map is:
\begin{align*}
\psi\inv(\bar{x},\bar{y},\bar{z}) 
&=\frac{1}{\sqrt{1-\bar{x}^2}}({\bar{x}},{\bar{y}},{\bar{z}})=(x,y,z).\end{align*}
The components of $\s(x,y,z)$ are:
$$
u= ay+bz,\quad v= ax+cz,\quad w=bx-cy.
$$               
Therefore the projection $\bar{\s}$ of $\s$ to the cylinder is the vector field:
\begin{align*}
\label{eq:dF}
\bar\s(\bar x,\bar y,\bar z)
&=d\psi_{(x,y,z)}(u,v,w) \\
&=(a\bar y+b\bar z)(1-\bar x^2,-\bar x\bar y,-\bar x\bar z)
+(0,a\bar x + c\bar z,b\bar x - c\bar y).
\end{align*}
Notice that $\bar\s$ extends smoothly across $\partial C$; ie.\ when $\bar x=\pm 1$.  Then $\bar\s(\bar x,\bar y,\bar z)=0$ for $(\bar x,\bar y,\bar z)\in C \cup \partial C$ if and only if the following non-linear system is satisfied:
\begin{align*}
0 &=(1-\bar x^2)(a\bar y+b\bar z), \\
0 &=a\bar x +c\bar z-\bar x\bar y(a\bar y+b\bar z), \\
0 &=b\bar x-c\bar y-\bar x\bar z(a\bar y+b\bar z).
\end{align*}
Note first that the constraint $\bar y^2+\bar z^2=1$ ensures that solutions exist only if $a^2+b^2\neq0$.  The option $a\bar y+b\bar z=0$ yields solutions:
$$
(\bar x,\bar y,\bar z)=\frac{\pm1}{\sqrt{a^2+b^2}}(c,b,-a).
$$
If $\lambda=0$ then $a^2+b^2=c^2$ and the solutions reduce to: 
$$
\pm(1, b/c, -a/c)\in \partial C,
$$ 
one on each component.  If $\lambda>0$ then $a^2+b^2>c^2$ so $|c|/\sqrt{a^2+b^2}<1$ and the solutions lie in $C$; they correspond to:  
$$
\frac{\pm1}{\sqrt{a^2+b^2-c^2}}(c,b,-a)\in H^2_1.
$$
Finally if $\lambda<0$ then $|c|/\sqrt{a^2+b^2}>1$ so there are no solutions on the closed cylinder.  The option $\bar x^2=1$ yields the previously obtained solutions in $\partial C$.
\end{proof}

\medskip
\begin{prop}\label{prop:killh21}
Let $\s$ be a Killing field on $H^2_1$. If $\lambda<0$, $\lambda=0$, $\lambda>0$, respectively, then $\s$ is congruent to the Killing field whose linear extension has the following normal form, respectively:
$$
\left( \begin{array}{ccc}
0 & 0 & 0 \\
0 & 0 & \sqrt{-\lambda} \\
0 & -\sqrt{-\lambda} & 0 \end{array} \right),
\qquad
\left( \begin{array}{ccc}
0 & 1 & 0 \\
1 & 0 & 1 \\
0 & -1 & 0 \end{array} \right),
\qquad
\left( \begin{array}{ccc}
0 & \sqrt{\lambda} & 0 \\
\sqrt{\lambda} & 0 & 0 \\
0 & 0 & 0 \end{array} \right).
$$
\end{prop}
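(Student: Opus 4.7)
My plan is to reinterpret congruence of Killing fields as the adjoint action of the isometry group on skew-symmetric linear extensions, i.e.\ on the Lie algebra of the isometry group, acting by $A\mapsto\Phi A\Phi\inv$. A calculation already carried out in the proof of \cref{thm:2dkf} gives $\<A,A\>=-2\lambda$, so $\lambda$ is an adjoint invariant and the three displayed representatives will automatically have the stated values of $\lambda$. It therefore suffices to show that each non-zero $\lambda$-level set of skew-symmetric extensions is a single adjoint orbit, after which matching against the listed normal forms is immediate. The geometric engine for establishing transitivity on each level set will be the fixed-point analysis of \cref{prop:2dkffix}.

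For $\lambda>0$, the Killing field has two fixed points $\pm p=\pm\lambda^{-1/2}(c,b,-a)\in H^2_1$. Since the isometry group acts transitively on $H^2_1$, I may arrange that $p$ coincides with the fixed point of the displayed representative, reducing the residual freedom to the stabiliser of a point, an $O(1,1)$ acting on the Lorentzian tangent plane. A direct computation on the corresponding $2\times 2$ block of $A$ (which must itself be skew with respect to the $(1,1)$-metric) shows that once $\lambda$ is fixed there is only one orbit. For $\lambda<0$, the vector $(c,b,-a)$ is space-like of pseudo-length $-\lambda>0$ in $\R^3_2$, and an ambient isometry aligns it with the $x$-axis; the residual stabiliser is then the $O(2)$ acting on the negative-definite $(y,z)$-plane, on which $A$ is an ordinary skew matrix whose magnitude is pinned down by $\lambda$. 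For $\lambda=0$, the two ideal fixed points of \cref{prop:2dkffix} select a pair of distinct null directions in $\R^3_2$; the isometry group acts transitively on such ordered pairs (in direct analogy with the $2$-transitive action of the M\"obius group on the boundary of the hyperbolic plane), and after a standard alignment the only remaining freedom is a homothety that absorbs the last parameter, delivering the Jordan-type representative.

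The main obstacle lies in the final linear-algebra step in each case: once the fixed-point geometry is normalised, one must verify that the residual stabiliser acts transitively on the leftover entries of $A$. These are $3\times 3$ matrix verifications, but tracking components of the orthogonal group is essential, because otherwise one risks producing two orbits differing by a sign instead of one. A secondary subtlety arises when $\lambda<0$: the distinguished vector $(c,b,-a)$ does not lie on $H^2_1$, so the alignment step uses the ambient orthogonal group $O(1,2)$, and one must briefly check that this action restricts to a transitive action on space-like rays by genuine isometries of $H^2_1$.
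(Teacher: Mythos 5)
Your proposal is correct in substance but takes a genuinely different route from the paper. The paper argues by brute force: it writes down an explicit one-parameter group of isometries (the flow $\Phi_t$ of an auxiliary boost-type Killing field, computed by solving a linear ODE system), chooses the parameter $t_0$ by $\cosh t_0=c/\sqrt{-\lambda}$, and verifies directly that $\Phi_{-t_0}A\,\Phi_{t_0}$ is in normal form; moreover it only carries this out for $\lambda<0$, declaring the other cases similar. You instead recast congruence as the adjoint action of $O(1,2)$ on skew-symmetric matrices, observe that $\<A,A\>=-2\lambda$ is an invariant (so the three representatives automatically separate), and use the kernel line $\R(c,b,-a)$ of $A$ --- which is exactly the fixed-point locus of \cref{prop:2dkffix}, with $Q(c,b,-a)=-\lambda$ --- as the equivariant datum to be aligned, reducing each case to a short stabiliser computation. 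This treats all three signs of $\lambda$ uniformly, explains \emph{why} $\lambda$ is a complete invariant rather than just exhibiting a conjugating element, and your flagged subtlety (using reflections such as $\mathrm{diag}(1,-1,1)$ to merge the two sign-orbits of the residual $2\times2$ block) is precisely the point that would otherwise leave two classes per level set; the paper's explicit flow buys self-containedness and sidesteps any appeal to transitivity statements. Two small corrections to your sketch: for $\lambda<0$ the level set $\{Q=-\lambda>0\}$ in $\R^3_2$ is a two-sheeted hyperboloid, so transitivity requires the full orthogonal group, not its identity component; and for $\lambda=0$ the two ideal fixed points are the antipodal rays of a \emph{single} null line (the kernel of $A$), so the relevant fact is transitivity on null lines together with a boost stabilising the line and rescaling the null generator to absorb the last parameter --- not $2$-transitivity on pairs of independent null directions. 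Neither affects the validity of the argument.
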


\begin{proof}
We give the argument for $\lambda<0$ (which is the case directly relevant to Theorem \ref{thm:2dkf}); the other cases are similar.  If $a^2+b^2=0$ then the matrix is already in normal form.  Otherwise, consider the infinitesimal isometry $\rho$ of $H^2_1$ whose linear extension has matrix:
$$
\left( \begin{array}{ccc}
0 & \alpha & \beta \\
\alpha & 0 & 0 \\
\beta & 0 & 0 \end{array} \right),
$$
where $\alpha={b}/{\sqrt{a^2+b^2}}$ and $\beta={-a}/{\sqrt{a^2+b^2}}$; thus $\a^2+\b^2=1$.  After solving an appropriate system of first order linear ODE (whose details we omit), the flow of $\rho$ is the restriction to $H^2_1$ of the following linear flow on $\R^3_2$:
$$
\Phi_t
=\begin{pmatrix}
\cosh t  & \a\sinh t & \b\sinh t \\
\a\sinh t & \b^2+\a^2\cosh t & \a\b (\cosh t-1) \\
\b\sinh t & \a\b(\cosh t-1) & \a^2+\b^2\cosh t 
\end{pmatrix}.
$$
If $c>0$ and the parameter $t_0$ is chosen such that $\cosh(t_0)=c/c_0$ where $c_0=\sqrt{-\lambda}$ then:
$$
\Phi_{t_0}
=\frac{1}{c_0}\begin{pmatrix}
c  & b & -a \\
b & c_0+b^2C & -abC \\
-a & -abC & c_0+a^2C 
\end{pmatrix}
$$
where $C=(c-c_0)/(a^2+b^2)$.  Then after some further computation:
$$
\Phi_{-t_0}A\/\Phi_{t_0} 
=\begin{pmatrix}
0 & 0 & 0 \\
0 & 0 & c_0 \\
0 & -c_0 & 0 
\end{pmatrix},
$$
which when restricted to $H^2_1$ yields the desired congruence.
\end{proof}

\medskip
\begin{thm}\label{thm:harmkill}
Let $M$ be a pseudo-Riemannian quadric of sectional curvature $\e=\pm1$, other than the Riemannian $2$-sphere.  Then up to congruence there exists a unique harmonic Killing field $\s$ on $M$, which is the restriction of one of the following matrices:
$$
\begin{pmatrix}
0 & 1& 0 \\
\e & 0 & 0 \\
0 & 0 & 0 
\end{pmatrix},
\qquad
\begin{pmatrix}
0 & 0 & 0 \\
0 & 0 & 1 \\
0 & \e & 0 
\end{pmatrix},
\qquad
\begin{pmatrix}
0 & 1 & 0 \\
-1 & 0 & 0 \\
0 & 0 & 0 
\end{pmatrix},
$$
according as $M=S^2_2$ or $H^2_0$, $M=S^2_1$ or $H^2_1$ or $M=H^2_0$, respectively. In all cases the metric parameters of $\s$ are $(3,-1/2)$.
\end{thm}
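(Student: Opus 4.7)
The plan is to combine Theorem \ref{thm:2dkf} with a congruence classification of Killing fields on each of the six $2$-dimensional quadrics, organised by the invariant $\lambda$.

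First I would apply Theorem \ref{thm:2dkf} directly: a Killing field $\s$ on a $2$-dimensional pseudo-Riemannian quadric is $(p,q)$-harmonic if and only if $p=3$, $q=-1/2$, and $\lambda=\e$. This immediately yields the claimed metric parameters $(3,-1/2)$ and reduces the entire theorem to two assertions for each quadric $M$ other than $S^2_0$: existence of a Killing field with $\lambda=\e$, and uniqueness of such a field up to congruence. The invariant $\lambda$ is given explicitly by Lemma \ref{lem:2dkfpreharm} in terms of the entries $a,b,c$ of the linear extension $A$ and the indicator symbols $\e_1,\e_2,\e_3$ of the ambient orthonormal frame.

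For existence I would carry out a brief case analysis. Inserting the indicator symbols of each ambient pseudo-Euclidean space into Lemma \ref{lem:2dkfpreharm}: for $S^2_0$ (all $\e_i=+1$, $\e=+1$) one finds $\lambda=-(a^2+b^2+c^2)\leqs0$, so $\lambda=\e=+1$ is unattainable, accounting for the sole exception. For each of the remaining five quadrics $\lambda$ becomes an indefinite quadratic form in $(a,b,c)$, and the three displayed matrices in the statement satisfy the pseudo-skew-symmetry relation \eqref{eq:matrix} with respect to the appropriate indicator symbols (two quadrics sharing each of the first two matrices, with $\e$ adjusted according to curvature). A short check using Lemma \ref{lem:2dkfpreharm} then confirms that each displayed matrix indeed realises $\lambda=\e$ on the corresponding quadric.

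For uniqueness I would invoke the congruence-by-$\lambda$ principle that is partly assembled in the paragraph preceding the theorem: for $S^2_0$ and $H^2_0$ it was established in \cite{BLW2}; it transfers via the canonical anti-isometry \eqref{eq:antisom} to $H^2_2$ and $S^2_2$ respectively, since the Lie algebra of Killing fields and the scalar $\lambda$ (built from $A^3$) are both preserved; for $H^2_1$ it is exactly Proposition \ref{prop:killh21}; and for $S^2_1$ it transfers from $H^2_1$ via the canonical anti-isometry between the neutral quadrics. In each of the five cases the harmonic condition $\lambda=\e$ then selects a single congruence class, which must therefore be represented by the explicit matrix listed in the statement.

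The main obstacle is the uniqueness step, specifically the bookkeeping in the anti-isometric transfer. Proposition \ref{prop:killh21} is written out in detail only for $\lambda<0$; for the transfer to $S^2_1$ one must track how $\lambda$ and the normal forms behave under \eqref{eq:antisom}, and then verify that the resulting normal form on $S^2_1$ can be conjugated, by an ambient isometry of $\R^3_1$, to the second displayed matrix with $\e=+1$. The analogous transfers $H^2_0\leftrightarrow S^2_2$ and $S^2_0\leftrightarrow H^2_2$ are similar but easier because one endpoint is Riemannian. Everything else is routine linear algebra in $\R^3_u$.
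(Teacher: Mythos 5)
Your proposal is correct and follows essentially the same route as the paper, which in fact leaves the proof of \cref{thm:harmkill} implicit: it is precisely the combination of \cref{thm:2dkf} (forcing $(p,q)=(3,-1/2)$ and $\lambda=\e$), the congruence-by-$\lambda$ classification assembled in the paragraph preceding \cref{prop:2dkffix} (from \cite{BLW2} for $S^2_0$ and $H^2_0$, transferred to $H^2_2$ and $S^2_2$ by the canonical anti-isometry), and \cref{prop:killh21} for the neutral quadrics. The only discrepancy is notational: the third case in the statement should read $M=H^2_2$ rather than $M=H^2_0$, as your existence check for the Euclidean-skew matrix confirms.
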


\bigskip    
\section{Para-K{\"a}hler twisted anti-isometries} 
\label{sec:PK}  
    
We recall \cite{CFG} that an {\sl almost para-Hermitian structure\/} on a pseudo-Riemannian manifold $(M,g)$ is a skew-symmetric $(1,1)$-tensor field $J$ satisfying $J^2=1$.  The existence of such a structure forces $(M,g)$ to be of even dimension and neutral signature.  If in addition $\nabla J=0$ then $J$ is {\sl para-K\"ahler.}  Because almost para-Hermitian structures are anti-isometric in the following sense:
$$
g(JX,JY)=-g(X,Y),
$$
a para-K\"ahler twisted harmonic vector field need not be harmonic; see \cref{eg:pktwist} below.  However combining an anti-isometry $\v$ with a para-K\"ahler twist $J$ rectifies this problem, for both $\v$ and $J$. 

\medskip
\begin{prop}\label{prop:pktwist}
Let $(M,g,J)$ be a para-K{\"a}hler manifold and $\varphi\colon(M,g)\to(N,h)$ an anti-isometry.  If $\s$ is a harmonic vector field on $(M,g)$ then the push-forward $\varphi_*(J\sigma)$ is a harmonic vector field on $(N,h)$, with the same metric parameters.         
\end{prop}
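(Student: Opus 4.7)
The plan is to show directly that $\tau_{p,q}(\tilde\sigma) = -\varphi_*\bigl(J\,\tau_{p,q}(\sigma)\bigr)$ for $\tilde\sigma := \varphi_*(J\sigma)$, from which the conclusion is immediate by \cref{thm:pqharm}. The strategy mirrors that of \cref{thm:hvfcong}, but now carefully tracks two sources of sign change: the anti-isometry condition $\varphi^* h = -g$, and the skew-symmetry of $J$, which gives $g(J\cdot, J\cdot) = -g(\cdot,\cdot)$.

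The first observation is that these two sign changes cancel when applied to $\tilde\sigma$ itself, so the pseudo-length is preserved: $\tilde F \circ \varphi = F$, and consequently the weight $(1+2\tilde F)\circ\varphi = (1+2F)$. Combining $\nabla J = 0$ with the total geodesy of $\varphi$ (a consequence of the anti-isometry, as in the proof of \cref{thm:hvfcong}) then yields
$$
\nabla^h_X \tilde\sigma = \varphi_*\bigl(J\,\nabla^g_{d\varphi^{-1}(X)}\sigma\bigr)
$$
for all $X \in TN$. An orthonormal $g$-frame $\{E_i\}$ on $M$ pushes forward to an $h$-frame with indicator symbols $\tilde\e_i = -\e_i$; this single surviving sign propagates through each derivative invariant, giving
$$
\langle\nabla\tilde\sigma,\nabla\tilde\sigma\rangle_h = -\langle\nabla\sigma,\nabla\sigma\rangle_g,\quad
h(\nabla\tilde F,\nabla\tilde F) = -g(\nabla F,\nabla F),\quad
\Delta\tilde F = -\Delta F,
$$
together with $\nabla^{*,h}\nabla^h\tilde\sigma = -\varphi_*\bigl(J\,\nabla^{*,g}\nabla^g\sigma\bigr)$ and $\nabla^h_{\nabla\tilde F}\tilde\sigma = -\varphi_*\bigl(J\,\nabla^g_{\nabla F}\sigma\bigr)$.

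Feeding these into \cref{thm:pqharm} I expect to find $T_p(\tilde\sigma) = -\varphi_*(JT_p(\sigma))$, since $(1+2\tilde F)$ is unchanged but both interior terms of $T_p$ acquire a single sign flip, and $\phi_{p,q}(\tilde\sigma) = -\phi_{p,q}(\sigma)$, since each of its three summands flips sign once. Combining,
$$
\tau_{p,q}(\tilde\sigma) = T_p(\tilde\sigma) - \phi_{p,q}(\tilde\sigma)\tilde\sigma = -\varphi_*\bigl(J\,\tau_{p,q}(\sigma)\bigr),
$$
which vanishes precisely when $\tau_{p,q}(\sigma)$ does, with matching metric parameters.

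The main obstacle is pure sign bookkeeping: the delicate point is that the weight $(1+2F)$ appearing in both $T_p$ and $\phi_{p,q}$ must survive unchanged (two sign flips cancel), while the three derivative invariants each pick up exactly one sign flip. It is precisely this asymmetry that explains why anti-isometry alone fails to preserve harmonicity (\cref{eg:cgf}) yet the para-K\"ahler twist rectifies the failure.
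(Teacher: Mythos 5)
Your proposal is correct and follows essentially the same route as the paper: establish $\tilde F=F$ via the cancellation of the two sign changes, then track how each remaining piece of $\tau_{p,q}$ transports under the connection-preserving map $\varphi$ and the parallel tensor $J$. The only difference is that your more careful sign bookkeeping yields $\tau_{p,q}(\tilde\sigma)=-\varphi_*\bigl(J\,\tau_{p,q}(\sigma)\bigr)$, whereas the paper asserts the identity without the minus sign ("all remaining pieces are invariant"); your version appears to be the accurate one, and in either case the vanishing of $\tau_{p,q}(\tilde\sigma)$ is equivalent to that of $\tau_{p,q}(\sigma)$, so the conclusion and the metric parameters are unaffected.
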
   

\begin{proof}
Abbreviating $\tilde{\sigma}=\varphi_*(J\sigma)$, we have:
\begin{align*}
h(\tilde{\sigma},\tilde{\sigma}) 
&= h(\varphi_*(J\sigma),\varphi_*(J\sigma))
=-g(J\sigma,J\sigma)
=g(\sigma,\sigma).
\end{align*}
Thus $\tilde F=F$.  Since $d\v$ and $J$ are parallel, all remaining pieces of the Euler-Lagrange vector field $\tau_{p,q}(\s)$ (see \cref{thm:pqharm}) are invariant, and we conclude that:
\begin{equation*}
\tau_{p,q}(\tilde\s)=\v_*(J\,\tau_{p,q}(\s)).
\qedhere
\end{equation*}
\end{proof}

\medskip
\begin{rem}\label{rem:pktwist}
A similar result holds if $\v$ is an anti-isometry into a para-K\"ahler manifold: if $\s$ is a harmonic vector field on the domain then $J(\v_*\s)$ is a harmonic vector field on the codomain.
\end{rem}

\medskip
We recall also that a vector field $\s$ on $(M,g)$ is said to be {\sl closed conformal\/} if $\s$ is conformal and its metrically dual $1$-form is closed \cite{Cam}.  By \cite{KK, KR} closed conformal vector fields are characterised by the following generalisation of \cref{prop:cgf}\,(3):
\begin{equation}
\nab X\s=\psi\/X,
\label{eq:ccvf}
\end{equation}
for some smooth function $\psi\colon M\to\R$, where necessarily $n\psi=\diverge\s$.

\medskip
\begin{prop}\label{prop:ccfpkkf}
Let $\sigma$ be a closed conformal vector field on a para-K\"ahler manifold $(M,g,J)$. Then $J\sigma$ is a Killing field.
\end{prop}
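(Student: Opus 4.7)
The plan is to verify the Killing equation $g(\nabla_X(J\sigma),Y)+g(\nabla_Y(J\sigma),X)=0$ directly, exploiting the two structural facts available: the defining identity \eqref{eq:ccvf} for closed conformal vector fields, and the parallelism of the para-K\"ahler structure $J$.

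First I would use $\nabla J=0$ to push the covariant derivative through $J$, obtaining $\nabla_X(J\sigma)=J(\nabla_X\sigma)$. Substituting \eqref{eq:ccvf} then gives the very simple formula $\nabla_X(J\sigma)=\psi\,JX$, valid for all tangent vectors $X$ and with $\psi$ the conformal factor of $\sigma$.

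The remaining step is to check skew-symmetry of the $(0,2)$-tensor $(X,Y)\mapsto g(\nabla_X(J\sigma),Y)$. From the previous step this tensor equals $\psi\,g(JX,Y)$. Since $J$ is an almost para-Hermitian structure, it is skew-symmetric with respect to $g$, so $g(JX,Y)=-g(X,JY)=-g(JY,X)$. Adding the symmetrisation therefore yields
\[
g(\nabla_X(J\sigma),Y)+g(\nabla_Y(J\sigma),X)=\psi\bigl(g(JX,Y)+g(JY,X)\bigr)=0,
\]
which is precisely the Killing condition for $J\sigma$.

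There is no real obstacle here; the argument is a three-line computation. The only subtlety worth flagging is that the proof uses \emph{both} defining properties of a para-K\"ahler structure: the parallelism $\nabla J=0$ (to interchange $J$ and $\nabla$) and the skew-symmetry of $J$ with respect to $g$ (to obtain the Killing identity). Neither the conformal factor $\psi$ nor the dimension of $M$ enters the conclusion.
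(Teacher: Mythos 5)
Your proof is correct and follows essentially the same route as the paper's: commute $\nabla$ past the parallel tensor $J$, substitute the closed conformal identity \eqref{eq:ccvf}, and cancel the symmetrised terms using the skew-symmetry of $J$ with respect to $g$.
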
 

\begin{proof}
Since $J$ is para-K\"ahler:
\begin{align*}
g(\nab X (J\s),Y) + g(X,\nab Y (J\s)) 
&=g(J\,\nab X\s,Y)+g(X,J\,\nab Y\s) \\
&=g(J(\psi X),Y)+g(X,J(\psi Y)),
\quad\text{by \eqref{eq:ccvf}} \\
&=-\psi g(X,JY)+\psi g(X,JY)=0.
\end{align*}
Hence $J\s$ is Killing.
\end{proof}

\medskip
Every oriented $2$-dimensional pseudo-Riemannian manifold of neutral signature  admits a unique para-K\"ahler structure that is compatible with the orientation in the following sense.  The null vectors $L\subset TM$ may be written $L=L_1\cup L_2$ where $L_1,L_2\subset TM$ are distinct line sub-bundles, labelled such that if $(A,B)$ is a positively oriented local tangent frame with $A\in L_1$ and $B\in L_2$ then $A+B$ is space-like (which implies $A-B$ is time-like).  Then define:
$$
JA=A,
\qquad
JB=-B.
$$ 
It is easily checked that $J$ is para-K\"ahler.  In particular, if $M$ is a neutral quadric then it follows from \cref{prop:ccfpkkf} that $\s\mapsto J\s$ yields a linear involutive isomorphism between the Killing and conformal gradient fields on $M$, since both spaces have the same dimension (namely, $3$).  Hence by \cref{prop:pktwist}, if $\v$ is the canonical anti-isometry from $H^2_1$ to $S^2_1$ then $\s\mapsto \v_*(J\s)$ yields a bijection between the unique congruence class of harmonic conformal gradient fields (resp.\ Killing fields) on $H^2_1$ and the congruence class of harmonic Killing fields (resp.\ conformal gradient fields) on $S^2_1$.  These classes are also bijectively equivalent via the correspondence of \cref{rem:pktwist}, using the para-K\"ahler structure of $S^2_1$.  However since $\v$ is para-holomorphic the two bijections are in fact the same. 

\medskip
\begin{eg}\label{eg:pktwist}
As an explicit example, let $\s$ be the conformal gradient field on $H^2_1$ with pole vector $(0,0,1)$, which is harmonic by \cref{thm:cgf}.  Then:
$$
(J\s)(x,y,z)=(y,x,0),
$$
which although Killing (\cref{prop:killh21}), with the same zeros $(0,0,\pm1)$ as $\s$, is not harmonic (\cref{thm:harmkill}); indeed, the harmonic Killing fields on $H^2_1$ have no fixed points.  From \eqref{eq:antisom} the canonical anti-isometry from $H^2_1$ to $S^2_1$ is:
$$
\v(x,y,z)=(z,x,y),
$$
and the push-forward of $J\s$ by $\v$ is the vector field:
$$
(x,y,z)\mapsto d\v((J\s)(\v\inv(x,y,z))
=d\v((J\s)(y,z,x))
=d\v(z,y,0)=(0,z,y),
$$
which by \cref{thm:harmkill} is harmonic, with zeros $(\pm1,0,0)$.
\end{eg}

\bigskip

\end{document}